\titleformat{\chapter}[block]{\filcenter\bfseries}{\thechapter.}{0.5em}{}
\titlespacing*{\chapter}{0pt}{2\baselineskip}{1\baselineskip}
\titleformat{\section}[block]{\filcenter\scshape}{\thesection.}{0.5em}{}
\titlespacing*{\section}{0pt}{2\baselineskip}{1\baselineskip}
\titleformat{\subsection}[block]{\filcenter\scshape}{\thesubsection.}{0.5em}{}
\titlespacing*{\subsection}{0pt}{2\baselineskip}{1\baselineskip}
\titleformat{\subsubsection}[block]{\filcenter\scshape}{\thesubsubsection.}{0.5em}{}
\titlespacing*{\subsubsection}{0pt}{2\baselineskip}{1\baselineskip}
\newcommand{\arxiv}{{arXiv}}
\newcommand{\computationalstatisticsanddataanalysis}{{Computational Statistics and Data Analysis}}
\newcommand{\journalofeconometrics}{{Journal of Econometrics}} 
\newcommand{\jspi}{{Journal of Statistical Planning and Inference}}  
\newcommand{\journalmultvaranal}{{Journal of Multivariate Analysis}} 
\newcommand{\journalroyalsociety}{{Journal of the Royal Statistical Society: Series B}} 
\newcommand{\stochprocappl}{{Stochastic Processes and their Applications}}
\newcommand{\statprob}{{Statistics and Probability Letters}}
\newcommand{\annmathstat}{{The Annals of Mathematical Statistics}}
\newcommand{\annstat}{{The Annals of Statistics}}
\newcommand{\cG}{{\cal G}}
\newcommand{\cL}{{\cal L}}
\newcommand{\cO}{{\mathcal O}}
\newcommand{\cS}{{\cal S}}
\newcommand{\ccB}{{\mathscr B}}
\newcommand{\ccC}{{\mathscr C}}
\newcommand{\ccD}{{\mathscr D}}
\newcommand{\ccG}{{\mathscr G}}
\newcommand{\ccK}{{\mathscr K}}
\newcommand{\ccL}{{\mathscr L}}
\newcommand{\ccS}{{\mathscr S}}
\newcommand{\ccT}{{\mathscr T}}
\newcommand{\mN}{\mathbb{N}}
\newcommand{\mR}{\mathbb{R}}
\newcommand{\mZ}{\mathbb{Z}}
\newcommand{\bbeta}{\boldsymbol{\eta}}
\newcommand{\bvarepsilon}{\boldsymbol{\varepsilon}}
\newcommand{\diag}{\operatorname{diag}}
\newcommand{\sign}{\operatorname{sign}}
\newcommand{\one}{{\mathbbm{1}}}
 				\newcommand{\mtilde}{~}   
 	 			\newcommand{\covop}{{\ccC}}
\declaretheoremstyle[
spaceabove=0.5cm, 
    	spacebelow=0.5cm, 
notefont=\itshape, notebraces={(}{)},
bodyfont=\normalfont,
headformat=\NAME~\NUMBER \NOTE
]{myremark}
\declaretheoremstyle[
    spaceabove=0.5cm, 
    spacebelow=0.5cm, 
    headfont=\normalfont\bfseries,
    notefont=\normalfont\bfseries, 
    notebraces={(}{)}, 
    bodyfont=\normalfont\itshape, 
    headpunct={.}
]{mytheorem}
\declaretheoremstyle[
    spaceabove=0.5cm, 
    spacebelow=0.5cm, 
    headfont=\normalfont\bfseries,
    notefont=\normalfont\bfseries, 
    notebraces={}{}, 
    bodyfont=\normalfont, 
    headpunct={.} 
]{myassumption} 
 \declaretheoremstyle[
    spaceabove=0.5cm, 
    spacebelow=0.5cm, 
    headfont=\normalfont\bfseries,
    notefont=\normalfont\bfseries, 
    notebraces={}{}, 
    bodyfont=\normalfont, 
    headpunct={.}
]{mydefinition} 
\theoremstyle{mytheorem}
\newtheorem{theorem}{Theorem}[section]
\newtheorem{proposition}[theorem]{Proposition}
\newtheorem{corollary}[theorem]{Corollary}
\newtheorem{lem}[theorem]{Lemma}
\theoremstyle{mydefinition}
\newtheorem{definition}[theorem]{Definition} 
\theoremstyle{myassumption}
\newtheorem{assumption}[theorem]{Assumption} 
\theoremstyle{myassumption}
\newtheorem*{assumption*}{Assumption}  
\theoremstyle{myassumption}
\theoremstyle{myassumption}
\newtheorem*{notations*}{Notation}  
\theoremstyle{myremark}
\newtheorem{remark}[theorem]{Remark}
\numberwithin{equation}{section}
\colorlet{linkcolour}{cyan!40!blue}
\colorlet{urlcolour}{magenta}
\title{{\Large \textsc{Detecting changes in Hilbert space data based on ``repeated'' and change-aligned principal components}}}
\date{}
\author{L. Torgovitski\footnote{Mathematical Institute, University of Cologne, Weyertal 86-90, 50931, Cologne, Germany. \\ This research was partially supported by the Friedrich Ebert Foundation, Germany.}}
\begin{document}  
\maketitle

{\small
\begin{abstract}
We study a CUSUM (cumulative sums) procedure for the detection of changes in the means of weakly dependent time series within an abstract Hilbert space framework. We use an empirical projection approach via a principal component representation of the data, i.e., we work with the eigenelements of the (long run) covariance operator.

This article contributes to the existing theory in two directions: By means of a recent result of \citet{reimherr} we show, on one hand, that the commonly assumed \textit{separation of the leading eigenvalues} for CUSUM procedures can be avoided. 
This assumption is \textit{not} a consequence of the methodology but merely a consequence of the usual proof techniques.
On the other hand, we propose to consider \textit{change-aligned} principal components that allow to further reduce common assumptions on the eigenstructure under the alternative. 
This approach extends directly to \textit{multidirectional} changes, i.e. changes that occur at different time points and in different directions, by fusing sufficient information on them into the first component. The latter findings are illustrated by a few simulations and compared with existing procedures in a functional data framework.
\end{abstract}
} 
\newpage
\section{Introduction}
Over the last ten years, the analysis of change point problems in a \textit{functional data} framework became a quite popular area of research. The incorporation of projections on functional principal components, i.e. eigenfunctions of the (long run) covariance operator, 
is meanwhile a well-established technique for change point procedures and there is a considerable amount of literature on this topic such as, e.g., \citet{berkes2009functional}, \citet{hoermann2010weakly}, \citet{kirch2012functional}, \citet{horvath2013testing}, \citet{lukasz2015} and the book of \citet{horvath2012fda}, just to name a few.

As pointed out in \citet{reimherr} most literature uses the assumption of well-separated leading $d$ eigenvalues
\begin{equation}\label{eq:separationass}
	\lambda_1>\lambda_2>\ldots>\lambda_d>\lambda_{d+1}
\end{equation}
to ensure suitable estimation of principal components \textit{up to signs}.
In the latter article it is shown in a rather broad generality that for the estimation of subspaces spanned by the $d$ leading principal components only the gap $\lambda_d>\lambda_{d+1}$ is relevant, if these subspaces are estimated \textit{as a whole} and not in each direction separately. We use the work of \citet{reimherr} to show \textit{how} the assumption on distinct eigenvalues can be omitted in a common change point framework.
The idea is to consider one projection of \textit{functional partial sums} on a $d$-dimensional subspace rather than $d$ partial sums of one-dimensional projections.

Another contribution is the study of change corrected principal components. Loosely speaking, the idea is to correct the leading empirical principal component with an estimate of the \textit{change direction} such that, on one hand, the component is asymptotically oriented in the change direction under the alternative but, on the other hand, still estimates the true principal component under the null hypothesis.  Note that the change direction represents the most informative subspace in our change point setting.
 A remarkable feature of this alignment is that it allows us to test for more complex \textit{multidirectional} changes in exactly the same fashion and under the same set of assumptions. This approach can be related to the study of \textit{one-dimensional} projections of \citet{kirch2014highdim} under high-dimensionality and also to corrections of covariance estimates which are common practice in multivariate change point analysis (cf., e.g., Remark 3 of \citet{antoch1997}).

We assume that $H$ is a real and separable infinite dimensional Hilbert space. The inner product is denoted by $\langle x,y\rangle_H$ and the corresponding norm by $\|x\|_H$ for $x,y\in H$. There should be no confusion, if we simply write $\langle x,y\rangle$ and $\|x\|$ without the subscript $H$.
The mean $EX$ of a Hilbert space random variable $X$ is defined generally as the element that fulfills $E\langle X,y\rangle= \langle EX, y\rangle$ for all $y\in H$ given that $E\|X\|<\infty$. Note that we show our results in an abstract setting having the space $L^2[0,1]$ of square-integrable functions in mind. 
\\
\\
The structure of this article is as follows. We begin with a description of the model and of the hypothesis tests in Section \ref{sec:model}. In Section \ref{sec:pca} we continue to explain the principal components approach and
then state our theoretical results on the CUSUM test based on repeated eigenvalues in Section \ref{sec:asymp}. The change adjusted principal components are introduced and studied in Section \ref{sec:aligned} and afterwards 
extended to more complex multiple-direction alternatives in Section \ref{sec:multipledirections}. The analysis is complemented by a small simulation study in Section \ref{sec:simulations}. All proofs are postponed to Section \ref{sec:proofs}.

\label{sec:asymp}
\section{Model and testing problem}\label{sec:model}
Let $\{\eta_i\}_{i\in\mZ}$ denote the observable Hilbert space valued time series in a signal plus noise model
\[
	\eta_i = m_i + \varepsilon_i
\]
with means $m_i=E\eta_i$ and a \textit{centered}, \textit{strictly stationary} Hilbert space noise $\varepsilon_i$ with $E\|\varepsilon_1\|^2<\infty$. 
Some weak dependence condition will be imposed further below. 
We assume that $m_i=g(i/n)\Delta$, $i=1,\ldots,n$, where $\Delta\in H$, $\|\Delta\|=1$, is the normalized direction of the means and $g$ is a function that describes their magnitude. We assume $g$ to be piecewise Lipschitz-continuous and our aim is to test the stability of the means $m_1,\ldots,m_n$, i.e., the null hypothesis
\[
	H_0: ~g ~~\text{is a constant function on $[0,1]$}
\]
against a trend under the alternative specified by
\[
	H_A: ~g ~~\text{is a \textit{non}-constant function on $[0,1]$}.
\]
We have the classical \textit{abrupt} or \textit{epidemic} settings in mind but consider this broad alternative to underpin the generality of our results. Note that this model has been considered, e.g., by \citet{horvath2013testing} in a related functional framework. They focused more on other alternatives but also state or indicate some results in our situation.

To test $H_0$ versus $H_A$ we will use a CUSUM procedure relying on dimension reduction where the basis of the reduced subspaces will be generated by \textit{generalized} Hilbert space principal components. 
Those have been suggested in a functional two-sample context by \citet{reeder2012twoB} and are motivated by the optimality properties of principal components and of their empirical counterparts.
To state our results and our interpretation of the projection based CUSUM we need to introduce the generalized principal components first.

\section{Generalized principal components}\label{sec:pca}
The \textit{generalized} principal components are eigenelements of the long run covariance operator
\[		
	\ccC = \sum_{h\in\mZ} \ccC_h, \qquad  \ccC_h=E[\varepsilon_0 \otimes \varepsilon _h],
\]
where the \textit{tensor operator} $x \otimes y $ is defined via $(x \otimes y) z = \langle y, z \rangle x$ for any $x,y,z\in H$ and thus is linear in all components. Let $\|\cdot\|_{\cS}$ denote the \textit{Hilbert-Schmidt} norm. In the following we tacitly assume the summability
of the lagged covariance operators 
\begin{equation}\label{eq:summability}
\sum_{h\in\mZ}\|\ccC_h\|_{\cS}<\infty
\end{equation}
which ensures that $\ccC$ is a well-defined positive self-adjoint Hilbert-Schmidt operator. Let $\hat{\ccC}$ be for the moment a generic estimate of $\ccC$ and assume that $\hat{\ccC}$ is 
a self-adjoint Hilbert-Schmidt operator, too.
In this case we may represent both operators using the spectral decomposition
\begin{align}
	\ccC &= \sum_{j\in\mN} \lambda_j (v_j \otimes v_j),\qquad \hat{\ccC} = \sum_{j\in\mN} \hat{\lambda}_j (\hat{v}_j \otimes \hat{v}_j)
\end{align}
with real eigenvalues $\lambda_j$ and $\hat{\lambda}_j$. Without loss of generality we may assume $\lambda_1\geq \lambda_2 \geq \lambda_2\geq \ldots \geq 0$ and also that the $v_j$'s form an orthonormal basis of $H$. We make exactly the same assumption on the eigenelements of $\hat{\ccC}$ but do not assume that all $\hat{\lambda}_j$ are non-negative.

Note that we use the term \textit{generalized} to distinguish between eigenelements of $\ccC$ and eigenelements of the covariance operator $\ccC_0$. The latter will be called \textit{standard} principal components.

\section{Asymptotic behavior of the CUSUM statistic}\label{sec:asymp}
We want to consider the following projection based CUSUM statistics
\begin{equation}\label{eq:projectedrepresentation}
	\ccT^{(d)}=\max_{1\leq k <n}|\Sigma^{-1/2}S_k(\bbeta)|, \qquad \hat{\ccT}^{(d)}=\max_{1\leq k <n}|\hat{\Sigma}^{-1/2}S_k(\hat{\bbeta})|,
\end{equation}
where $\hat{\ccT}^{(d)}$ is the empirical counterpart of $\ccT^{(d)}$. The detectors \eqref{eq:projectedrepresentation} are based on projected time series
\begin{align}
\bbeta_i&=[\langle \eta_i, v_1 \rangle, \ldots, \langle \eta_i, v_d \rangle]^{\prime},\qquad \hat{\bbeta}_i=[\langle \eta_i, \hat{v}_1 \rangle, \ldots, \langle \eta_i, \hat{v}_d \rangle]^{\prime}
\end{align}
and we write $S_k(\bbeta)=\sum_{i=1}^k (\bbeta_i -\bar{\bbeta}_n)/n^{1/2}$ to denote their centered partial sums. Note that $\ccT^{(d)}$ is not fully observable in practice, since the eigenelements of $\ccC$ will rarely be known.
The matrix $\Sigma=\diag(\lambda_1,\ldots,\lambda_d)$ is the long run covariance matrix of $\{\bbeta_i\}_{i\in\mZ}$ and $\hat{\Sigma}=\diag(|\hat{\lambda}_1|,\ldots,|\hat{\lambda}_d|)$ is its generic estimate. (The diagonality of $\Sigma$ is a property of the generalized principal components.) 
Finally,  $|x|$ denotes the \textit{Euclidean} norm and thus $|x|_{\Sigma}=|\Sigma^{-1/2} x|$, $x\in\mR^{d}$, is also a norm whenever $\Sigma$ has full rank. 
To ensure this regularity we tacitly assume that $\lambda_d>0$. Later on we will ensure that $\lim_{n\rightarrow\infty}P(\hat{\lambda}_d>0)=1$ holds true under the null hypothesis, too. We do not require the latter under the alternative, but set $\hat{\ccT}^{(d)}:=\infty$ if $\hat{\lambda}_j=0$ for some $1\leq j\leq d$.

It is well-known that a \textit{functional central limit theorem} for the projected time series $\{\bbeta_i\}_{i\in\mZ}$ allows to establish weak convergence of $\ccT^{(d)}$ under the null hypothesis by means of the continuous mapping theorem. 
Under short range dependence (such as, e.g., strong mixing or $\ccL^p$-$m$-approximability) the statistics $\ccT^{(d)}$, $\hat{\ccT}^{(d)}$ and other related detectors are studied in, e.g., \citet{berkes2009functional}, \citet{hoermann2010weakly}, \citet{kirch2012functional} or \citet{horvath2013testing}. Under $H_0$ the following limit holds true
\begin{equation} \label{eq:contmap}
	\ccT^{(d)}\stackrel{\ccD}{\longrightarrow} \sup_{0\leq x\leq 1}\Big(\sum_{1\leq r \leq d} \ccB^2_r(x)\Big)^{1/2},
\end{equation}
as $n\rightarrow\infty$, where $\{\ccB_r(x), x\in[0,1]\}$ are independent standard Brownian bridges. Clearly, to establish the same convergence for $\hat{\ccT}^{(d)}$ we just have to replace the eigenelements of $\ccC$ with those of $\hat{\ccC}$. As pointed
out by \citet{reimherr} this is usually done by considering the estimation of the eigenstructure in each direction separately and not at once. This separated treatment is the origin of some technical issues and in particular has the drawback of the separation assumption \eqref{eq:separationass} on the eigenvalues.

We show how one may use the results of \citet{reimherr} to get rid of this assumption and impose only the following condition on the \textit{last} eigenvalue. 
 
\begin{assumption}\label{ass:disjoint} It holds that $\lambda_d>\lambda_{d+1}$.
\end{assumption}
 
The key to our results is the following CUSUM representation
\begin{equation}\label{eq:funcrepresent}
	{\ccT}^{(d)}=\max_{1\leq k <n}\|\ccC^{(d)}S_k(\eta)\|, \qquad \hat{\ccT}^{(d)}=\max_{1\leq k <n}\|\hat{\ccC}^{(d)}S_k(\eta)\|
\end{equation} 
which is based on \textit{truncated} operators defined by
\begin{align}\label{eq:roots}
\ccC^{(d)}= \sum_{j=1}^d \lambda_j^{-1/2} (v_j \otimes v_j), \qquad \hat{\ccC}^{(d)}= \sum_{j=1}^d |\hat{\lambda}_j|^{-1/2} (\hat{v}_j \otimes \hat{v}_j),
\end{align}
for any $d\in\mN$. Note that \eqref{eq:funcrepresent} just replaces the long run covariance matrices and the Euclidean norm in \eqref{eq:projectedrepresentation} with the Hilbert space analogues. The equalities in \eqref{eq:funcrepresent} are valid in view of \textit{Parseval's} identity that implies
\begin{align*}
 	\|\ccC^{(d)}S_k(\eta)\|^2&=\sum_{r=1}^\infty |\langle \ccC^{(d)}S_k(\eta),v_r\rangle|^2\\
 	&= \sum_{r=1}^\infty \Big|\Big\langle \langle \sum_{j=1}^d \lambda_j^{-1/2} S_k(\eta),v_j \rangle v_j,v_r\Big\rangle\Big|^2\\
 	&= \sum_{j=1}^d |\lambda_j^{-1/2} \langle S_k(\eta),v_j \rangle|^2 =|{\Sigma^{-1/2}}S_k(\bbeta)|^2
\end{align*}
for all $1\leq k<n$. The next proposition is proven by (slightly) adapting the proofs of Lemmas 3.1 and 3.2 of \citet{reimherr} where \eqref{eq:roots} is considered with $\lambda_j^{-1}$ instead of $\lambda_j^{-1/2}$. 
\begin{proposition}\label{prop:bounds} Given that $\|\ccC - \hat{\ccC}\|_{\cS}=o_P(1)$ it holds that $\|\ccC^{(d)} - \hat{\ccC}^{(d)}\|_{\cS} = o_P(1)$ as $n\rightarrow\infty$.
 \end{proposition}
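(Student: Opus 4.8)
The plan is to use Assumption~\ref{ass:disjoint} to realize both $\ccC^{(d)}$ and $\hat{\ccC}^{(d)}$ as the image of $\ccC$ and $\hat{\ccC}$ under \emph{one and the same} fixed analytic function of the operator, so that no individual eigenelement $v_j,\hat v_j$ ever has to be estimated and the separation assumption~\eqref{eq:separationass} is never needed. The first step is an eigenvalue perturbation bound. Since $\|\ccC-\hat{\ccC}\|_{\mathrm{op}}\le\|\ccC-\hat{\ccC}\|_{\cS}=o_P(1)$, Weyl's inequality for self-adjoint compact operators gives $\sup_{j}|\hat\lambda_j-\lambda_j|=o_P(1)$. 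Because the gap $\lambda_d>\lambda_{d+1}\ge 0$ forces $\lambda_d>0$, there is an event $E_n$ with $P(E_n)\to 1$ on which simultaneously $\hat\lambda_1,\dots,\hat\lambda_d>0$ (so that the absolute values in~\eqref{eq:roots} may be dropped, $|\hat\lambda_j|^{-1/2}=\hat\lambda_j^{-1/2}$), the top $d$ eigenvalues $\hat\lambda_1,\dots,\hat\lambda_d$ lie in a fixed neighbourhood of $[\lambda_d,\lambda_1]$, and the whole remaining spectrum of $\hat{\ccC}$ stays below a fixed level strictly between $\lambda_{d+1}$ and $\lambda_d$.

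Next I would fix a closed contour $\gamma\subset\mC$ that encircles the interval $[\lambda_d,\lambda_1]$, keeps a fixed positive distance $\rho$ from $0$, and separates $\{\lambda_1,\dots,\lambda_d\}$ from the rest of $\sigma(\ccC)$; such a $\gamma$ exists \emph{precisely} because $\lambda_d>\lambda_{d+1}$, and crucially its existence does not care about ties among $\lambda_1,\dots,\lambda_d$. On $E_n$ the same $\gamma$ separates the top $d$ eigenvalues of $\hat{\ccC}$ from the remainder. Choosing the branch of $z\mapsto z^{-1/2}$ analytic inside $\gamma$, the Riesz--Dunford calculus represents both operators as contour integrals picking out exactly the enclosed eigenvalues,
\[
	\ccC^{(d)}=\frac{1}{2\pi i}\oint_{\gamma} z^{-1/2}\,(zI-\ccC)^{-1}\,dz,\qquad
	\hat{\ccC}^{(d)}=\frac{1}{2\pi i}\oint_{\gamma} z^{-1/2}\,(zI-\hat{\ccC})^{-1}\,dz\quad\text{on }E_n.
\]
Subtracting and inserting the resolvent identity $(zI-\ccC)^{-1}-(zI-\hat{\ccC})^{-1}=(zI-\ccC)^{-1}(\ccC-\hat{\ccC})(zI-\hat{\ccC})^{-1}$, together with the submultiplicativity $\|ABC\|_{\cS}\le\|A\|_{\mathrm{op}}\|B\|_{\cS}\|C\|_{\mathrm{op}}$, yields on $E_n$
\[
	\|\ccC^{(d)}-\hat{\ccC}^{(d)}\|_{\cS}\le\frac{\|\ccC-\hat{\ccC}\|_{\cS}}{2\pi}\oint_{\gamma}|z|^{-1/2}\,\|(zI-\ccC)^{-1}\|_{\mathrm{op}}\,\|(zI-\hat{\ccC})^{-1}\|_{\mathrm{op}}\,|dz|.
\]

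The main obstacle, and the only point needing more than routine estimates, is the uniform control of the two resolvents along $\gamma$. For self-adjoint operators $\|(zI-\ccC)^{-1}\|_{\mathrm{op}}=1/\operatorname{dist}(z,\sigma(\ccC))$, which is bounded by $1/\rho$ by the construction of $\gamma$; the analogous bound for $\hat{\ccC}$ holds on $E_n$ because, by the first step, no eigenvalue of $\hat{\ccC}$ approaches $\gamma$ (the top $d$ remain inside, all others remain below). Hence the integrand is uniformly $O(1)$, $\gamma$ has fixed finite length, and $\sup_{z\in\gamma}|z|^{-1/2}\le\rho^{-1/2}$, so $\|\ccC^{(d)}-\hat{\ccC}^{(d)}\|_{\cS}\le C\,\|\ccC-\hat{\ccC}\|_{\cS}$ on $E_n$ for a deterministic $C$. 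As $P(E_n)\to1$ and $\|\ccC-\hat{\ccC}\|_{\cS}=o_P(1)$, the claim follows. This is exactly the argument underlying Lemmas~3.1 and~3.2 of \citet{reimherr}, the sole modification being that their integrand $z^{-1}$ is replaced by $z^{-1/2}$; since the latter is again analytic and bounded on $\gamma$ (equivalently, $x\mapsto x^{-1/2}$ is Lipschitz on $[\rho,\infty)$, so the Hilbert--Schmidt functional-calculus estimate has the same form), their estimates transfer without change.
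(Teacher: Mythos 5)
Your proof is correct, but it takes a genuinely different route from the paper. The paper never touches resolvents: it inserts the \emph{mixed} operator $\hat{\ccC}^{(d)\prime}=\sum_{j\le d}\lambda_j^{-1/2}(\hat v_j\otimes\hat v_j)$ (true eigenvalues, estimated eigenvectors), splits $\|\ccC^{(d)}-\hat{\ccC}^{(d)}\|_{\cS}$ into an eigenvector-perturbation term and an eigenvalue-perturbation term, bounds the first by adapting the explicit estimates of Lemmas 3.1 and 3.2 of \citet{reimherr}, and bounds the second via the elementary mean-value inequality $4(x^{-1/2}-y^{-1/2})^2\le(x-y)^2/(\min(x,y))^3$ together with $|\hat\lambda_j-\lambda_j|\le\|\ccC-\hat{\ccC}\|_{\cS}$; the conclusion then follows from $\lambda_d>\lambda_{d+1}\ge0$. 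You instead realize both truncated operators as a single Riesz--Dunford integral $\frac{1}{2\pi i}\oint_\gamma z^{-1/2}(zI-\cdot)^{-1}\,dz$ over a contour that exists precisely because of the single gap $\lambda_d>\lambda_{d+1}$, and conclude by the resolvent identity and the ideal property of the Hilbert--Schmidt norm. Both arguments are sound and both isolate the same essential point (only the $d$-th gap matters, ties among $\lambda_1,\dots,\lambda_d$ are irrelevant); yours is more self-contained and arguably cleaner, producing a single deterministic Lipschitz constant on a high-probability event rather than citing and modifying external lemmas, while the paper's version yields explicit finite-sample constants (in terms of $\lambda_d-\lambda_{d+1}$ and $\lambda_d$) that could be turned into rates. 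Two small points to tighten: state explicitly that the region enclosed by $\gamma$ is simply connected and excludes $0$ (so the branch of $z^{-1/2}$ is well defined there, not merely ``at distance $\rho$ from $0$''), and note that the Weyl bound you use compares the \emph{decreasingly ordered} eigenvalues of $\ccC$ and of the possibly non-positive $\hat{\ccC}$, which is what guarantees that exactly the top $d$ eigenvalues of $\hat{\ccC}$ end up inside $\gamma$ on $E_n$.
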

In the sequel we will use the following bound on the maxima of partial sums
\begin{equation}\label{eq:funcsums}
 \max_{1\leq k <n}\|S_k(\varepsilon)\| =\cO_P(1),
\end{equation}
as $n\rightarrow\infty$. We will discuss this bound in \autoref{rem:conds}, below. First, we state the asymptotics under the null hypothesis.
\begin{theorem}\label{thm:convH0}
Let $\|\ccC - \hat{\ccC}\|_\cS=o_P(1)$ and \autoref{ass:disjoint} hold true. Given \eqref{eq:contmap} and \eqref{eq:funcsums}, it holds under $H_0$ that, as $n\rightarrow\infty$
\begin{equation}\label{eq:asymptNull}
	\hat{\ccT}^{(d)}\stackrel{\ccD}{\longrightarrow} \sup_{0\leq x\leq 1}\Big(\sum_{1\leq r \leq d} \ccB^2_r(x)\Big)^{1/2}.
\end{equation}
\end{theorem}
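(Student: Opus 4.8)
The plan is to derive the convergence of $\hat{\ccT}^{(d)}$ from that of $\ccT^{(d)}$ by a converging-together (Slutsky-type) argument, showing that the two statistics are asymptotically equivalent and exploiting the operator representation \eqref{eq:funcrepresent}. The decisive simplification is that under $H_0$ the function $g$ is constant, so the common constant mean is annihilated by the centering $\bar{\eta}_n$ and the centered partial sums satisfy $S_k(\eta)=S_k(\varepsilon)$ for every $k$. Hence both $\ccT^{(d)}$ and $\hat{\ccT}^{(d)}$ are built from the very same partial sums $S_k(\varepsilon)$ and differ \emph{only} through the operators $\ccC^{(d)}$ and $\hat{\ccC}^{(d)}$ applied to them.

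First I would reduce to the event $A_n=\{\hat{\lambda}_d>0\}$, on which $\hat{\ccC}^{(d)}$ is well defined and $\hat{\ccT}^{(d)}<\infty$. Weyl's perturbation inequality for compact self-adjoint operators gives $|\hat{\lambda}_d-\lambda_d|\le\|\ccC-\hat{\ccC}\|_{\cS}=o_P(1)$, and since $\lambda_d>0$ (which holds under \autoref{ass:disjoint} because $\lambda_{d+1}\ge 0$) this yields $P(A_n)\to 1$. The complementary event, on which we have set $\hat{\ccT}^{(d)}:=\infty$, is then asymptotically negligible and cannot affect a weak limit.

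On $A_n$ the core estimate is the reverse-triangle bound for maxima,
\[
\big|\hat{\ccT}^{(d)}-\ccT^{(d)}\big|
\le \max_{1\le k<n}\big\|(\hat{\ccC}^{(d)}-\ccC^{(d)})S_k(\varepsilon)\big\|
\le \|\hat{\ccC}^{(d)}-\ccC^{(d)}\|_{\cS}\,\max_{1\le k<n}\|S_k(\varepsilon)\|,
\]
where the first step uses $|\max_k a_k-\max_k b_k|\le\max_k|a_k-b_k|$ together with $\big|\,\|x\|-\|y\|\,\big|\le\|x-y\|$, and the second bounds the operator norm by the Hilbert--Schmidt norm uniformly in $k$. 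Proposition~\ref{prop:bounds} upgrades the hypothesis $\|\ccC-\hat{\ccC}\|_{\cS}=o_P(1)$ to $\|\hat{\ccC}^{(d)}-\ccC^{(d)}\|_{\cS}=o_P(1)$, while \eqref{eq:funcsums} provides $\max_{1\le k<n}\|S_k(\varepsilon)\|=\cO_P(1)$. The product is therefore $o_P(1)\cdot\cO_P(1)=o_P(1)$, so $\hat{\ccT}^{(d)}-\ccT^{(d)}=o_P(1)$; combining this with the assumed convergence \eqref{eq:contmap} of $\ccT^{(d)}$ delivers \eqref{eq:asymptNull}.

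The genuine analytic work has already been done in Proposition~\ref{prop:bounds}, which is exactly where \autoref{ass:disjoint} enters: without a spectral gap at $\lambda_d$ the truncated square-root operator $\hat{\ccC}^{(d)}$ need not be stable under perturbation of $\ccC$. Given that proposition, the remaining effort here is essentially bookkeeping, and I expect the only mild points to verify to be that the centering reduces $S_k(\eta)$ to $S_k(\varepsilon)$ exactly under $H_0$ and that \eqref{eq:funcsums} thereby transfers to $S_k(\eta)$ --- both immediate once $g$ is constant. The one place demanding a little care is the uniformity in $k$, which is precisely what pulling $\|\hat{\ccC}^{(d)}-\ccC^{(d)}\|_{\cS}$ outside the maximum achieves.
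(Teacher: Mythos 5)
Your proposal is correct and follows essentially the same route as the paper: under $H_0$ the centering reduces $S_k(\eta)$ to $S_k(\varepsilon)$, the representation \eqref{eq:funcrepresent} plus the reverse-triangle inequality for maxima gives $|\hat{\ccT}^{(d)}-\ccT^{(d)}|\le\|\ccC^{(d)}-\hat{\ccC}^{(d)}\|_{\cS}\max_{1\le k<n}\|S_k(\varepsilon)\|$, and Proposition~\ref{prop:bounds} together with \eqref{eq:funcsums} and Slutsky concludes. Your explicit treatment of the event $\{\hat{\lambda}_d>0\}$ via Weyl's inequality is a small extra point of rigor that the paper only alludes to in Section~4, but it does not change the argument.
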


A widely used approach to estimate $\ccC$ is via Bartlett-type estimates which are defined by
\[
	\hat{\ccC}_B=\sum_{r=-n}^{n}\ccK(r/h)\hat{\ccC}_r
\]
with $\hat{\covop}_r=\sum_{i=1}^{n-r}[\eta_{i}-\bar{\eta}_n]\otimes [\eta_{i+r}-\bar{\eta}_n]/n$ for $r\geq 0$ and symmetrically with $\hat{\covop}_r=\sum_{i=1}^{n+r}[\eta_{i-r}-\bar{\eta}_n]\otimes [\eta_{i}-\bar{\eta}_n]/n$ for $r<0$.
The \textit{bandwidth} $h=h_n\in\mN$ fulfills $h\rightarrow\infty$ as $n\rightarrow\infty$ but $h=o(n)$. The \textit{kernel} $\ccK$ is symmetric, i.e. $\ccK(x)=\ccK(-x)$ for $x\in\mR$, and piecewise continuous with $\ccK(0)=0$. Moreover, it is continuous at $x=0$, bounded by  $|\ccK(x)|\leq c$ for some constant $c>0$ and has a bounded support such that $\ccK(x)=0$ for $|x|>a$ for some $a>0$.

To be more specific about the conditions of \autoref{thm:convH0} and some of the subsequent theorems we consider the following weak dependence concept. 
 \begin{definition}
The time series $\{\varepsilon_i\}_{i\in\mZ}$ is \,\(\ccL^p\)-\-\(m\)-\-\textit{approximable} if the following conditions are all satisfied:
1) It holds that  $E\|\varepsilon_0\|^p<\infty$, for some $p\geq 1$.
2) It holds that $\varepsilon_i=f(\zeta_i, \zeta_{i-1},\zeta_{i-2}, \ldots)$ where $f: S^\infty \rightarrow H$ is a measurable mapping from some measurable space $S$ and where the innovations $\zeta_i$ are i.i.d.\ \,\(S\)-valued random elements. 
3) It holds that $\sum_{m=1}^\infty [E\|\varepsilon_0-\varepsilon_0^{(m)}\|^p]^{1/p}<\infty$ where $\varepsilon_i^{(m)}$ are $m$-dependent copies of $\varepsilon_i$ defined by $\smash{\varepsilon^{(m)}_i= f(\zeta_{i} ,\ldots , \zeta_{i-m+1}, \zeta^{i}_{i-m}, \zeta^{i}_{i-(m+1)}, \ldots)}$, using a family  $\{\zeta_r, \zeta_{i}^{j}, \mtilde i,j,r \in \mZ\}$ of i.i.d.\ random variables. 
\end{definition} 

\begin{remark}\label{rem:conds}
In \autoref{thm:convH0} we have to ensure that \eqref{eq:summability}, \eqref{eq:contmap}, $\|\ccC - \hat{\ccC}\|_\cS=o_P(1)$ and that \eqref{eq:funcsums} hold true. Let $\{\varepsilon_i\}_{i\in\mZ}$ be $\ccL^2$-$m$-approximable. The summability assumption \eqref{eq:summability} is shown, e.g., in \citet{lukasz2015} and \eqref{eq:contmap} follows from Theorem A.1 of \citet{Aue2009}. Using the above Bartlett-type estimates the $\|\ccC - \hat{\ccC}_B\|_\cS=o_P(1)$ bound follows similarly to \cite{reeder2012twoB} and rates can be obtained, e.g., via \citet{whipple2014}, \citet{berkes2015rice} or \citet{lukasz2015} under $\ccL^4$-$m$-approximability with (partly) some additional but mild assumptions. We refer also to \citet{panaretos2013} for another broad setting. 
The bound \eqref{eq:funcsums} follows from \citet{jirak2013} or from \citet{berkes2013rice}.

Note that the conditions $\|\ccC - \hat{\ccC}\|_\cS=o_P(1)$ and $\max_{1\leq k <n}\|S_k(\varepsilon)\| =\cO_P(1)$ are directly related to those mentioned in Remark 3.1 of \citet{kirch2012functional} but here on the operator and on the Hilbert space level.
\end{remark}

Next, we state the asymptotics under the alternative.

\begin{proposition}\label{thm:alternativeGeneral}
Assume that we use estimates $\hat{\ccC}$ such that $|\langle \hat{v}_k,\Delta\rangle|=c+o_P(1)$ holds true for some $c>0$ and that $\hat{\lambda}_k =o_P(n)$ is fulfilled for some $1\leq k\leq d$. Given \eqref{eq:funcsums}, it holds under $H_A$ that
\begin{equation}\label{eq:asymptAlt}
	\lim_{n\rightarrow\infty}P(\hat{\ccT}^{(d)}>t)=1
\end{equation}
for any threshold $t\in\mR$.
\end{proposition}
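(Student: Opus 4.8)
The plan is to prove consistency by bounding $\hat{\ccT}^{(d)}$ below through a single direction and a single time point and showing that this bound already diverges. Writing $j$ for the time index (to avoid clashing with the component index $k$ from the statement) and using the Parseval computation already employed for \eqref{eq:funcrepresent}, namely $\|\hat{\ccC}^{(d)}S_j(\eta)\|^2=\sum_{r=1}^d|\hat{\lambda}_r|^{-1}|\langle S_j(\eta),\hat{v}_r\rangle|^2$, I would keep only the $k$-th summand and a single time index $j=\lfloor n x^\ast\rfloor$ for a point $x^\ast$ fixed below, obtaining
\[
\hat{\ccT}^{(d)}\;\geq\;|\hat{\lambda}_k|^{-1/2}\,\bigl|\langle S_{\lfloor n x^\ast\rfloor}(\eta),\hat{v}_k\rangle\bigr|.
\]
On the event that some $\hat{\lambda}_j=0$ we have $\hat{\ccT}^{(d)}=\infty$ by convention and there is nothing to prove, so it suffices to argue on the complementary event where $\hat{\lambda}_k\neq 0$, and to show that the right-hand side tends to $+\infty$ in probability.

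Next I would split $\langle S_j(\eta),\hat{v}_k\rangle$ into a deterministic signal part and a noise part via $\eta_i=g(i/n)\Delta+\varepsilon_i$. The noise part $\langle S_j(\varepsilon),\hat{v}_k\rangle$ is controlled uniformly: by Cauchy--Schwarz and $\|\hat{v}_k\|=1$ it is at most $\max_{1\leq j<n}\|S_j(\varepsilon)\|=\cO_P(1)$ from \eqref{eq:funcsums}. The signal part equals $\langle\Delta,\hat{v}_k\rangle\cdot n^{-1/2}\sum_{i=1}^{j}(g(i/n)-\bar g_n)$, where $\bar g_n=n^{-1}\sum_{i=1}^n g(i/n)$; here the alignment assumption $|\langle\hat{v}_k,\Delta\rangle|=c+o_P(1)$ with $c>0$ is exactly what prevents the change from being annihilated by the projection.

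The heart of the argument is to show that the centered partial sums of $g$ grow at the parametric rate. Since $g$ is piecewise Lipschitz, a Riemann-sum approximation gives, for $j=\lfloor n x\rfloor$, that $n^{-1}\sum_{i=1}^{j}(g(i/n)-\bar g_n)\to G(x):=\int_0^x g(u)\,du-x\int_0^1 g(u)\,du$ with an $O(1/n)$ error. As $g$ is \emph{non}-constant under $H_A$ one has $G\not\equiv 0$ (otherwise $G'(x)=g(x)-\int_0^1 g=0$ almost everywhere forces $g$ to be a.e.\ constant), so I fix $x^\ast\in(0,1)$ with $\delta:=|G(x^\ast)|>0$. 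Then $n^{-1/2}\sum_{i=1}^{\lfloor n x^\ast\rfloor}(g(i/n)-\bar g_n)=n^{1/2}\bigl(G(x^\ast)+o(1)\bigr)$ has modulus $\sim\delta\,n^{1/2}$, and multiplying by $|\langle\Delta,\hat{v}_k\rangle|\geq c/2$ (with probability tending to one) shows the signal part dominates the $\cO_P(1)$ noise. Collecting the pieces,
\[
\hat{\ccT}^{(d)}\;\geq\;|\hat{\lambda}_k|^{-1/2}\Bigl(\tfrac{c}{2}\delta\,n^{1/2}-\cO_P(1)\Bigr)\;\geq\;\tfrac{c\delta}{4}\,\bigl(n/|\hat{\lambda}_k|\bigr)^{1/2}
\]
with probability tending to one, and the final assumption $\hat{\lambda}_k=o_P(n)$ gives $(n/|\hat{\lambda}_k|)^{1/2}\to\infty$ in probability, whence $P(\hat{\ccT}^{(d)}>t)\to 1$ for every $t\in\mR$.

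The step I expect to be most delicate is not any single inequality but the identification of a usable lower bound: one must recognise that a single direction at a single time point already carries a diverging signal, and verify the non-degeneracy $G\not\equiv 0$ from mere non-constancy of a piecewise Lipschitz $g$, taking care that ``non-constant'' is genuine (positive-measure) deviation so that some $x^\ast$ with $G(x^\ast)\neq 0$ exists. The remaining estimates — the uniform $\cO_P(1)$ noise bound and the Riemann-sum convergence — are routine once the two hypotheses (alignment $c>0$ and $\hat{\lambda}_k=o_P(n)$) are seen to play their respective roles of \emph{preserving} and \emph{amplifying} the signal.
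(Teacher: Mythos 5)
Your proposal is correct and follows essentially the same route as the paper: the paper likewise bounds $\hat{\ccT}^{(d)}$ below by the single projection $|\langle S_{k}(\eta)/n^{1/2},\hat{v}_k\rangle|\,|n/\hat{\lambda}_k|^{1/2}$ and then invokes \eqref{eq:mainalt} (which packages your Riemann-sum and noise estimates into $\sup_x\|S_{\lfloor nx\rfloor}(\eta)/n^{1/2}-\cG(x)\Delta\|=o_P(1)$) together with \eqref{eq:cGlarge} and the two hypotheses on $\hat{v}_k$ and $\hat{\lambda}_k$. Your write-up merely makes explicit the details the paper leaves implicit, including the non-degeneracy $\ccS>0$ and the convention for vanishing $\hat{\lambda}_j$.
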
 

Above assumptions can be ensured using Bartlett-type estimates, e.g., under the concept of $\ccL^p$-$m$-approximability. The next proposition is related to the statement in (3.14) of \citet{horvath2013testing}. As already mentioned, the \textit{trend-alternative} case is indicated in the latter article, too. Note that no additional assumptions on the eigenvalues are necessary.

\begin{proposition}\label{thm:alternative}
Assume that $\{\varepsilon_i\}_{i\in\mZ}$ are $\ccL^4$-$m$-approximable and that we use estimates $\hat{\ccC}_B$ with $\int_{-\infty}^\infty\ccK(x)dx\neq 0$. Then, \eqref{eq:asymptAlt} holds true under $H_A$ for any $d\in\mN$.
\end{proposition}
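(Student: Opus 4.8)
The plan is to deduce the statement from Proposition~\ref{thm:alternativeGeneral}. Since $\{\varepsilon_i\}_{i\in\mZ}$ being $\ccL^4$-$m$-approximable guarantees \eqref{eq:funcsums} (see \autoref{rem:conds}), it suffices to verify, for the specific choice $\hat{\ccC}=\hat{\ccC}_B$, that there is an index $1\leq k\leq d$ with $|\langle \hat{v}_k,\Delta\rangle|=c+o_P(1)$ for some $c>0$ and with $\hat{\lambda}_k=o_P(n)$. Everything then reduces to a spectral analysis of $\hat{\ccC}_B$ under $H_A$. Writing $\bar{g}_n=n^{-1}\sum_{i=1}^n g(i/n)$ and $\tilde{g}_i=g(i/n)-\bar{g}_n$, the empirically centered observations split as $\eta_i-\bar{\eta}_n=\tilde{g}_i\Delta+\tilde{\varepsilon}_i$ with $\tilde{\varepsilon}_i=\varepsilon_i-\bar{\varepsilon}_n$ and, crucially, $\sum_{i=1}^n\tilde{g}_i=0$. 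Substituting this into $\hat{\ccC}_B$ and expanding every tensor product yields a decomposition $\hat{\ccC}_B=\ccA_n+\ccE_n$, where $\ccA_n=\big(\sum_r \ccK(r/h)\,n^{-1}\sum_i \tilde{g}_i\tilde{g}_{i+r}\big)(\Delta\otimes\Delta)$ is the signal part and $\ccE_n=\ccB_n+\ccR_n$ collects the noise part $\ccB_n=\sum_r \ccK(r/h)\,n^{-1}\sum_i \tilde{\varepsilon}_i\otimes\tilde{\varepsilon}_{i+r}$ and the two mixed (cross) terms $\ccR_n$.

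Next I would pin down the order of each piece. The noise part $\ccB_n$ is exactly the Bartlett estimator built from the centered noise, so $\|\ccB_n-\ccC\|_{\cS}=o_P(1)$ by the consistency results quoted in \autoref{rem:conds} (this is where $\ccL^4$-$m$-approximability enters). For the deterministic scalar factor in $\ccA_n$, a Riemann-sum argument gives $n^{-1}\sum_i \tilde{g}_i\tilde{g}_{i+r}\to \sigma_g^2:=\int_0^1(g-\int_0^1 g)^2>0$ for each fixed $r$ (using that $g$ is piecewise Lipschitz and $|r|/n\to 0$), and hence, summing the $\cO(h)$ nonvanishing kernel weights, $\sum_r \ccK(r/h)\,n^{-1}\sum_i \tilde{g}_i\tilde{g}_{i+r}=h\,\sigma_g^2\int_{-\infty}^\infty\ccK(x)\,dx\,(1+o(1))$; note $\sigma_g^2>0$ precisely because $g$ is non-constant under $H_A$. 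Finally, using $\sum_i \tilde{g}_i=0$ the mixed sums reduce to $n^{-1}\sum_i \tilde{g}_i\varepsilon_{i+r}$, and boundedness of the $\tilde{g}_i$ together with summability of the noise autocovariances gives $E\|n^{-1}\sum_i \tilde{g}_i\varepsilon_{i+r}\|^2=\cO(n^{-1})$ uniformly in $r$; since only $\cO(h)$ kernel weights are nonzero, $\|\ccR_n\|_{\cS}=\cO_P(h\,n^{-1/2})$. Consequently $\|\ccE_n\|_{\cS}=\cO_P(1)+\cO_P(h\,n^{-1/2})=o_P(h)$.

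Combining these estimates, $\hat{\ccC}_B=M_n(\Delta\otimes\Delta)+\ccE_n$ with $M_n=h\,\sigma_g^2\int_{-\infty}^\infty\ccK(x)\,dx\,(1+o(1))$ and $\|\ccE_n\|_{\cS}=o_P(h)$, so that (for the usual kernels with $\int_{-\infty}^\infty\ccK(x)\,dx>0$) $M_n\to\infty$ while the perturbation is of strictly smaller order. Weyl's inequality then yields $\hat{\lambda}_1=M_n+\cO_P(\|\ccE_n\|_{\cS})$, which is of order $h$, whence $\hat{\lambda}_1/n\to_P 0$, i.e. $\hat{\lambda}_1=o_P(n)$ because $h=o(n)$. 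The same bounds force a diverging spectral gap $\hat{\lambda}_1-\hat{\lambda}_2\geq M_n-\cO_P(\|\ccE_n\|_{\cS})\to\infty$, so the Davis--Kahan $\sin\theta$ theorem applied to the rank-one leading term gives $|\langle \hat{v}_1,\Delta\rangle|=1+o_P(1)$. With $k=1$ and $c=1$ both hypotheses of Proposition~\ref{thm:alternativeGeneral} are met and \eqref{eq:asymptAlt} follows. I expect the main obstacle to be the uniform control of the signal Riemann sum and of the cross term $\ccR_n$ across the $\cO(h)$ relevant lags — because $h\to\infty$ these are not fixed-lag statements — together with the bookkeeping needed to guarantee that the $\Delta$-direction surfaces as a \emph{leading} (rather than a trailing) empirical eigendirection, which is the point where the sign of $\int_{-\infty}^\infty\ccK(x)\,dx$ enters.
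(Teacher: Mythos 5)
Your proposal is correct and follows essentially the same route as the paper: the paper's proof rests on an auxiliary result (\autoref{diss:theorem:intermediate_conv_alt_bothconcepts} together with \autoref{diss:lem:coefficient_convergence}) establishing exactly your decomposition $\|\hat{\ccC}_B - s_n(\Delta\otimes\Delta)\|_{\cS}=o_P(h)$ with $s_n\sim h\,\ccG(g)\int\ccK$, from which it likewise deduces $\hat{\lambda}_1=\cO_P(h)=o_P(n)$ and $\|\hat{v}_1-\hat{s}\Delta\|=o_P(1)$ by perturbation of the rank-one limit. The only cosmetic difference is that you verify the hypotheses of \autoref{thm:alternativeGeneral} with $k=1$, whereas the paper re-derives the lower bound directly via $\|h^{1/2}\hat{\ccC}^{(1)}_{B}S_k(\eta)/n^{1/2}\|$; the uniformity over the $\cO(h)$ lags that you flag as the main obstacle is precisely what \autoref{diss:lem:coefficient_convergence} and the displayed bound \eqref{diss:eq:more_delicate1} supply.
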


In the i.i.d.\ case it appears more natural to use the estimate $\hat{\ccC}_0$ for $\ccC=\ccC_0$ rather than $\hat{\ccC}_B$. However, a well-known issue is that \textit{not all} changes can be detected then. In particular the condition $|\langle \hat{v}_k,\Delta\rangle|=c+o_P(1)$, $c>0$, for some $1\leq k \leq d$, needs to be imposed and is quite difficult to verify. For example it is discussed in \citet{berkes2009functional} and \citet{kirch2012functional} that this condition is fulfilled for changes $\Delta$ that are not orthogonal to all $v_1,\ldots,v_d$ and also for \textit{sufficiently} large trends.

One remedy is to use the estimate $\hat{\ccC}_B$ instead of $\hat{\ccC}_0$ which is discussed in \citet{horvath2013testing}. It forces the leading eigenvector $\hat{v}_1$ to align with the change direction. We discuss another \textit{more direct} approach in the next subsection that relies on a correction of the first eigenvector. This correction is not restricted to $\hat{\ccC}_0$ and can be applied to eigenvectors of $\hat{\ccC}_B$, too.

\section{Change-aligned principal components}\label{sec:aligned}
We define the \textit{aligned} first principal component by
\begin{equation}\label{eq:def_uhat}
	\hat{v}_1^\prime=[\hat{v}_1/n^{\gamma}+ \hat{s}\hat{u}]/\|\hat{v}_1/n^{\gamma} + \hat{s}\hat{u}\|
\end{equation}
for some $\gamma\in(0,1/2)$, where $\hat{u}=S_{\bm\hat{k}}(\eta)/n^{1/2}$, $\hat{s}=\sign\langle \hat{v}_1,\hat{u}\rangle$, and where $\bm\hat{k}$ is chosen such that 
$\|S_{\bm\hat{k}}(\eta)\|=\max_{1\leq k <n}\|S_k(\eta)\|$. 
Under the null hypothesis and given \eqref{eq:funcsums} it behaves asymptotically like the empirical principal component $\hat{v}_1$ whereas under the alternative it aligns with the change direction of $\Delta$.
The speed of the alignment is controlled via the parameter $\gamma$. Clearly, a larger $\gamma$ slows down the convergence under the null but speeds it up under the alternative.
Using $\hat{v}_1^\prime$ we define the adjusted statistic
\[
	\hat{\ccT}^{(d)\prime}=\max_{1\leq k <n}|S_k(\hat{\bbeta}^\prime)|_{\hat{\Sigma}}
\]	
where $\hat{\bbeta}_i^\prime=[\langle \eta_i, \hat{v}_1^\prime \rangle, \langle \eta_i, \hat{v}_2 \rangle,\ldots, \langle \eta_i, \hat{v}_d \rangle]^{\prime}$, i.e. with $\hat{v}_1$ being replaced by $\hat{v}_1^\prime$ but leaving all other components and in particular \textit{all} the eigenvalue estimates unchanged. Note that representation \eqref{eq:funcrepresent} is not valid anymore for $\hat{\ccT}^{(d)\prime}$ since the orthogonality is generally violated but this is not an issue in our context. 
 
\begin{corollary}\label{cor:nullAligned} Under the assumptions of \autoref{thm:convH0} the asymptotics \eqref{eq:asymptNull} hold true for $\hat{\ccT}^{(d)\prime}$ under $H_0$.  
\end{corollary}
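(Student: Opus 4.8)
The plan is to show that replacing $\hat{v}_1$ by the aligned component $\hat{v}_1^\prime$ changes the statistic only by an asymptotically negligible amount under $H_0$, and then to conclude via \autoref{thm:convH0} and Slutsky's lemma. Since $\hat{\bbeta}^\prime$ and $\hat{\bbeta}$ coincide in coordinates $2,\dots,d$ and the weighting matrix $\hat{\Sigma}$ is identical for both statistics, the whole argument reduces to controlling the effect of the first coordinate, i.e.\ of the perturbation $\hat{v}_1^\prime-\hat{v}_1$.

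First I would establish that $\|\hat{v}_1^\prime-\hat{v}_1\|=o_P(1)$. Under $H_0$ the constant mean cancels in the centered partial sums, so $S_k(\eta)=S_k(\varepsilon)$ for every $k$, and hence $\|\hat{u}\|=\max_{1\le k<n}\|S_k(\eta)\|/n^{1/2}=\cO_P(n^{-1/2})$ by \eqref{eq:funcsums}. Rescaling the defining expression \eqref{eq:def_uhat} by $n^\gamma$ in numerator and denominator gives
\[
\hat{v}_1^\prime=\frac{\hat{v}_1+\hat{s}\,n^\gamma\hat{u}}{\|\hat{v}_1+\hat{s}\,n^\gamma\hat{u}\|},
\]
and since $\gamma<1/2$ we have $\|\hat{s}\,n^\gamma\hat{u}\|\le n^\gamma\|\hat{u}\|=\cO_P(n^{\gamma-1/2})=o_P(1)$. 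Thus the numerator equals $\hat{v}_1+o_P(1)$ in $H$-norm and the denominator tends to $\|\hat{v}_1\|=1$ in probability, which yields $\|\hat{v}_1^\prime-\hat{v}_1\|=o_P(1)$.

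Next I would compare the two statistics directly. By the reverse triangle inequality, applied first to the maxima and then to the norm $|\cdot|_{\hat{\Sigma}}$,
\[
\big|\hat{\ccT}^{(d)\prime}-\hat{\ccT}^{(d)}\big|\le\max_{1\le k<n}\big|S_k(\hat{\bbeta}^\prime)-S_k(\hat{\bbeta})\big|_{\hat{\Sigma}}.
\]
The vector $S_k(\hat{\bbeta}^\prime)-S_k(\hat{\bbeta})$ has only its first coordinate nonzero, equal to $\langle S_k(\eta),\hat{v}_1^\prime-\hat{v}_1\rangle$, so the right-hand side equals $|\hat{\lambda}_1|^{-1/2}\max_k|\langle S_k(\eta),\hat{v}_1^\prime-\hat{v}_1\rangle|$, and a Cauchy--Schwarz bound gives
\[
\big|\hat{\ccT}^{(d)\prime}-\hat{\ccT}^{(d)}\big|\le|\hat{\lambda}_1|^{-1/2}\Big(\max_{1\le k<n}\|S_k(\eta)\|\Big)\,\|\hat{v}_1^\prime-\hat{v}_1\|.
\]
Here $\max_k\|S_k(\eta)\|=\cO_P(1)$ by \eqref{eq:funcsums}, $\|\hat{v}_1^\prime-\hat{v}_1\|=o_P(1)$ from the previous step, and $|\hat{\lambda}_1|^{-1/2}=\cO_P(1)$ because $\|\ccC-\hat{\ccC}\|_\cS=o_P(1)$ forces $\hat{\lambda}_1=\lambda_1+o_P(1)$ through the perturbation bound $|\hat{\lambda}_1-\lambda_1|\le\|\ccC-\hat{\ccC}\|_\cS$, with $\lambda_1\ge\lambda_d>0$. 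Consequently the bound is $o_P(1)$, and \autoref{thm:convH0} together with Slutsky's lemma delivers the asymptotics \eqref{eq:asymptNull} for $\hat{\ccT}^{(d)\prime}$. The only delicate point is the scaling step that renders the correction $\hat{s}\hat{u}$ negligible relative to $\hat{v}_1/n^\gamma$ (using $S_k(\eta)=S_k(\varepsilon)$ under $H_0$ and $\gamma<1/2$); everything else is a routine perturbation and continuous-mapping argument.
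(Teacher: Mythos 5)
Your proposal is correct and follows essentially the same route as the paper: bound $|\hat{\ccT}^{(d)\prime}-\hat{\ccT}^{(d)}|$ by $|\hat{\lambda}_1|^{-1/2}\max_{1\leq k<n}\|S_k(\varepsilon)\|\,\|\hat{v}_1^\prime-\hat{v}_1\|$ and show each factor is controlled, with $\|\hat{v}_1^\prime-\hat{v}_1\|=o_P(1)$ coming from $n^\gamma\hat{u}=\cO_P(n^{\gamma-1/2})=o_P(1)$ under $H_0$. The only cosmetic difference is that the paper writes out an explicit algebraic bound for $\|\hat{v}_1-\hat{v}_1^\prime\|$ where you argue via convergence of numerator and denominator of the rescaled expression; the substance is identical.
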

\begin{theorem}\label{thm:alternativeAligned}
Assume that we use estimates $\hat{\ccC}$ such that $\hat{\lambda}_1=o_P(n)$ holds true as $n\rightarrow\infty$. Given \eqref{eq:funcsums}, it holds under $H_A$ that
\[
	\lim_{n\rightarrow\infty}P(\hat{\ccT}^{(d)\prime}>t)=1
\]
for any threshold $t\in\mR$.
\end{theorem}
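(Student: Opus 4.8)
The plan is to bound $\hat{\ccT}^{(d)\prime}$ from below by its \emph{first} coordinate alone and to show that this coordinate already diverges, the point being that by construction $\hat{v}_1'$ is forced to align with the direction of the observed change.

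First I would decompose the CUSUM under $H_A$. With $m_i=g(i/n)\Delta$ and $\bar{g}_n=n^{-1}\sum_{i=1}^n g(i/n)$,
\[
S_k(\eta)=\mu_n(k)\,\Delta+S_k(\varepsilon),\qquad \mu_n(k):=\tfrac{1}{n}\sum_{i=1}^{k}\big(g(i/n)-\bar{g}_n\big).
\]
Piecewise Lipschitz continuity of $g$ makes $\mu_n(\lfloor n x\rfloor)$ converge uniformly in $x$ to $\int_0^x(g-\bar g)$, a continuous function vanishing at $x\in\{0,1\}$ whose supremum $c^\ast:=\sup_{x}\big|\int_0^x(g-\bar g)\big|$ is strictly positive since $g$ is non-constant. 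Because \eqref{eq:funcsums} yields $\max_{k}\|S_k(\varepsilon)\|/n^{1/2}=o_P(1)$, the triangle inequality gives $\max_{1\le k<n}\|S_k(\eta)\|/n^{1/2}=c^\ast+o_P(1)$. Evaluating at the maximiser $\bm\hat{k}$ from \eqref{eq:def_uhat} then produces
\[
\hat{u}=S_{\bm\hat{k}}(\eta)/n^{1/2}=a_n\Delta+o_P(1),\qquad |a_n|=c^\ast+o_P(1),
\]
so $\|\hat{u}\|$ is bounded away from zero with probability tending to one.

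Next I would pass to the aligned component, which is the step that does the real work. Since $\|\hat{v}_1\|=1$ and $\gamma>0$, the term $\hat{v}_1/n^{\gamma}\to0$ deterministically, so the numerator of $\hat{v}_1'$ equals $\hat{s}\hat{u}+o(1)=\hat{s}a_n\Delta+o_P(1)$, with norm $|a_n|+o_P(1)$ bounded away from zero. Dividing by this norm gives
\[
\hat{v}_1'=\hat{s}\,\sign(a_n)\,\Delta+o_P(1),\qquad |\langle\hat{v}_1',\Delta\rangle|=1+o_P(1).
\]
Thus $\hat{v}_1'$ asymptotically coincides with the change direction, whereas in \autoref{thm:alternativeGeneral} the analogous alignment had to be \emph{assumed} through the hard-to-verify condition $|\langle\hat{v}_k,\Delta\rangle|=c+o_P(1)$. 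The only genuine difficulty is to extract both facts --- that $\hat{u}$ has norm bounded below \emph{and} points in the $\Delta$ direction --- from the single property that $\bm\hat{k}$ maximises $\|S_k(\eta)\|$; this is exactly what the estimate $|a_n|=c^\ast+o_P(1)$ above delivers, so beyond it the argument is routine.

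Finally I would bound the statistic below by its first coordinate. Using $\langle S_k(\eta),\hat{v}_1'\rangle=S_k(\hat{\bbeta}')_1$ and $\langle S_{\bm\hat{k}}(\eta),\hat{v}_1'\rangle=n^{1/2}\langle\hat{u},\hat{v}_1'\rangle$ with $|\langle\hat{u},\hat{v}_1'\rangle|=|a_n|+o_P(1)=c^\ast+o_P(1)$, I obtain, on the event that all $\hat{\lambda}_j\neq0$ (the statistic being $+\infty$ otherwise),
\[
\hat{\ccT}^{(d)\prime}\ge\frac{|\langle S_{\bm\hat{k}}(\eta),\hat{v}_1'\rangle|}{|\hat{\lambda}_1|^{1/2}}=\Big(\frac{n}{|\hat{\lambda}_1|}\Big)^{1/2}\big(c^\ast+o_P(1)\big).
\]
The hypothesis $\hat{\lambda}_1=o_P(n)$ forces $n/|\hat{\lambda}_1|\to\infty$ in probability, so the right-hand side diverges and $\lim_{n\to\infty}P(\hat{\ccT}^{(d)\prime}>t)=1$ for every $t\in\mR$, as claimed. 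Note that, in contrast to \autoref{thm:alternativeGeneral}, no assumption on the remaining eigenvalues or on the position of $\Delta$ relative to $v_1,\ldots,v_d$ enters.
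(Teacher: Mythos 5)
Your argument is correct and follows essentially the same route as the paper: lower-bound $\hat{\ccT}^{(d)\prime}$ by its first coordinate, use the uniform approximation $S_{\lfloor nx\rfloor}(\eta)/n^{1/2}\approx\cG(x)\Delta$ (the paper's \eqref{eq:mainalt}, a consequence of \eqref{eq:funcsums}) to get $\|\hat{u}\|=\ccS+o_P(1)$ with $\ccS>0$, note that the $\hat{v}_1/n^{\gamma}$ contribution is $\cO_P(n^{-\gamma})$, evaluate at $k=\hat{k}$ to obtain $|\langle\hat{u},\hat{v}_1^\prime\rangle|=\ccS+o_P(1)$, and let $\hat{\lambda}_1=o_P(n)$ drive the divergence. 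The only (harmless) difference is that you additionally derive the explicit alignment $\hat{v}_1^\prime=\pm\Delta+o_P(1)$, a step the paper deliberately bypasses since only the lower bound on $|\langle\hat{u},\hat{v}_1^\prime\rangle|$ is needed.
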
 
The assumptions of \autoref{thm:alternativeAligned} are fulfilled in the i.i.d.\ case if we work with $\hat{\ccC}=\hat{\ccC}_0$ (cf., e.g., \citet{berkes2009functional}). Under $\ccL^4$-$m$-approximability, they follow from the proof of \autoref{thm:alternative} if we use $\hat{\ccC}=\hat{\ccC}_B$ with $\int_{-\infty}^\infty\ccK(x)dx\neq 0$.  

\section{Extension to multiple change directions}\label{sec:multipledirections}
Another interesting advantage of the change-aligned principal components is that we may test far more complex alternatives within the same set of assumptions as in \autoref{thm:alternativeAligned}. Consider the model
\begin{equation}\label{eq:multiple_direction}
 	m_i=g_1(i/n)\Delta_1+\ldots + g_\varrho(i/n)\Delta_\varrho
\end{equation}
where $\Delta_1,\ldots, \Delta_\varrho\in H$, $\varrho\in\mN$, are orthonormal directions and $g_i$ are piecewise Lipschitz continuous functions.
The general test is now $H_0$: all $g_l$ are constant on $[0,1]$ versus $H_A$: at least one $g_l$ is non-constant on $[0,1]$. 
Proceeding as under the \textit{one direction} framework one may restate \autoref{thm:alternativeAligned}.
Note that the proof essentially relies on the convergence  $\|\hat{u}\|\rightarrow^{P}\ccS>0$, as $n\rightarrow\infty$, where $\hat{u}$ is defined in \eqref{eq:def_uhat} and where in this general case
\[
	\ccS:=\sup_{0\leq x\leq 1}\|\cG_{g_1}(x)\Delta_1 + \ldots + \cG_{g_\varrho}(x)\Delta_\varrho\|^2=\sup_{0\leq x\leq 1}|\cG_{g_1}(x)|^2+\ldots + |\cG_{g_\varrho}(x)|^2.
\]
Here, $\cG_{g_l}$ denote $\cG$, as defined in \eqref{eq:integralsup}, but with respect to trends $g_l$. The equality holds due to
Parseval's identity and due to the orthonormality of the change directions. Furthermore, using Bartlett-type estimates $\hat{\ccC}_B$ we can repeat (with some notational effort) \autoref{diss:lem:coefficient_convergence} and \autoref{diss:theorem:intermediate_conv_alt_bothconcepts}, e.g., under $\ccL^4$-$m$-approximability, to show that all assumptions of the analogue of \autoref{thm:alternativeAligned} are fulfilled, too.

Loosely speaking, the information on all changes is \textit{condensed} into one direction. This result may even be of some interest in a multivariate framework.

\section{Simulations}\label{sec:simulations}
 
In this section we illustrate our findings and compare the test based on aligned principal components with the principal component approach of \citet{berkes2009functional} and of \citet{horvath2013testing} in a functional framework with $H=L^2[0,1]$.
For the simulations we use \href{https://www.r-project.org/}{R}. We consider independent square-integrable functional observations generated from sample paths of standard Brownian motions on $[0,1]$. Those are computed using the \href{https://cran.r-project.org/web/packages/e1071/index.html}{e1071-package} and
then converted into functional objects with the \href{https://cran.r-project.org/web/packages/fda/index.html}{{fda-package}} using a Fourier basis of $25$ basis functions which results in a rather smooth representation. 
For our simulations under the alternative we use piecewise linear trend functions 
 \[
 	g_{[\theta_1,\theta_2]}(x)=(x-\theta_1)(\theta_2-\theta_1)^{-1} \one_{\{\theta_1<x\leq\theta_2\}}(x) +  \one_{\{\theta_2<x\}}(x), \quad 0<\theta_1\leq \theta_2<1.
 \]
 The particular trend is not important for our discussion and epidemic or other alternatives could be chosen as well.
 First, we show the effects of the change-alignment by using a sample of $n=200$. We consider an abrupt change setting with $g(x)=\frac{1}{3}g_{[1/2,1/2]}(x)$ and the change direction $\Delta=(v_{10}+v_{11}+v_{12})/3^{1/2}$. 
 Some of these functional observations are illustrated in \autoref{fig:toy_example} which shows them before and after the change. 
 The change direction is chosen with a sole purpose: it is \textit{not} aligned with any of the leading \textit{true} population principal components of the data. 
    \begin{figure} 
\centering
\subfloat[][]{\includegraphics[width=7cm, height=5cm]{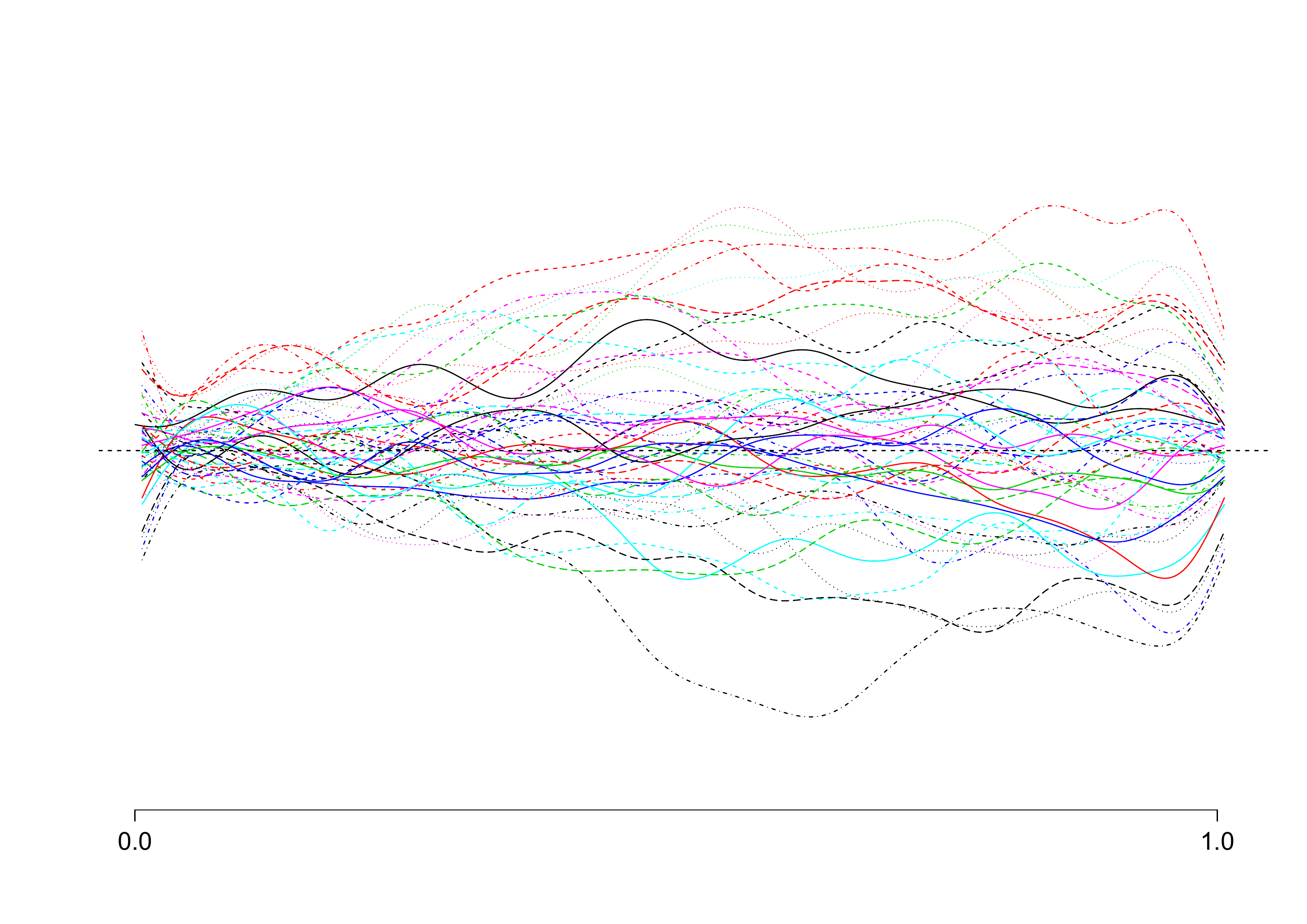}}
\subfloat[][]{\includegraphics[width=7cm, height=5cm]{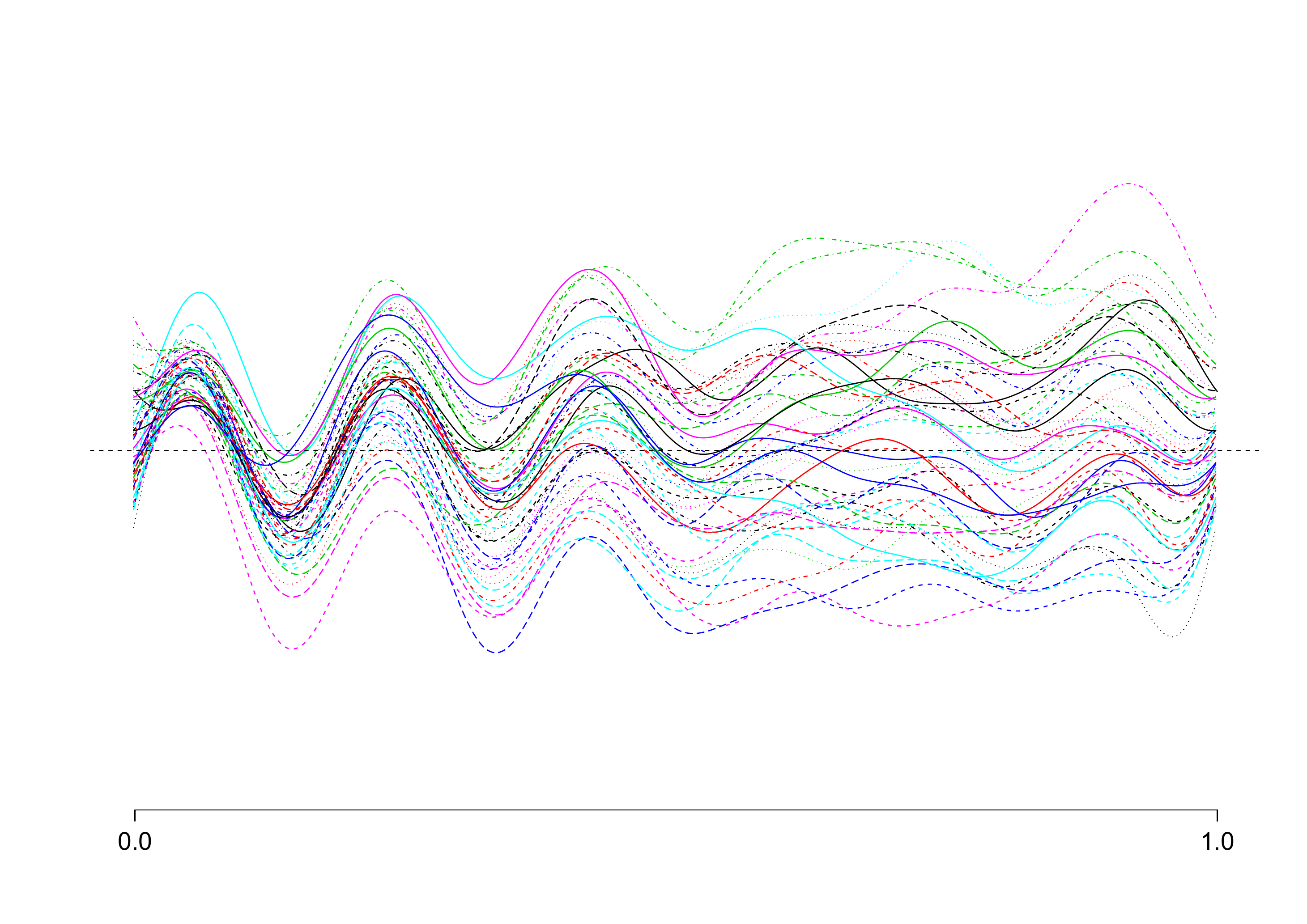}}  
\caption{Functional observations with and without a functional change.}
\label{fig:toy_example}
\end{figure}

 \begin{figure} 
\centering
\subfloat[][$\hat{v}_{1}^\prime$\\\\\mbox{\!\!\!\!aligned}]{%
\includegraphics[width=3cm, height=3cm]{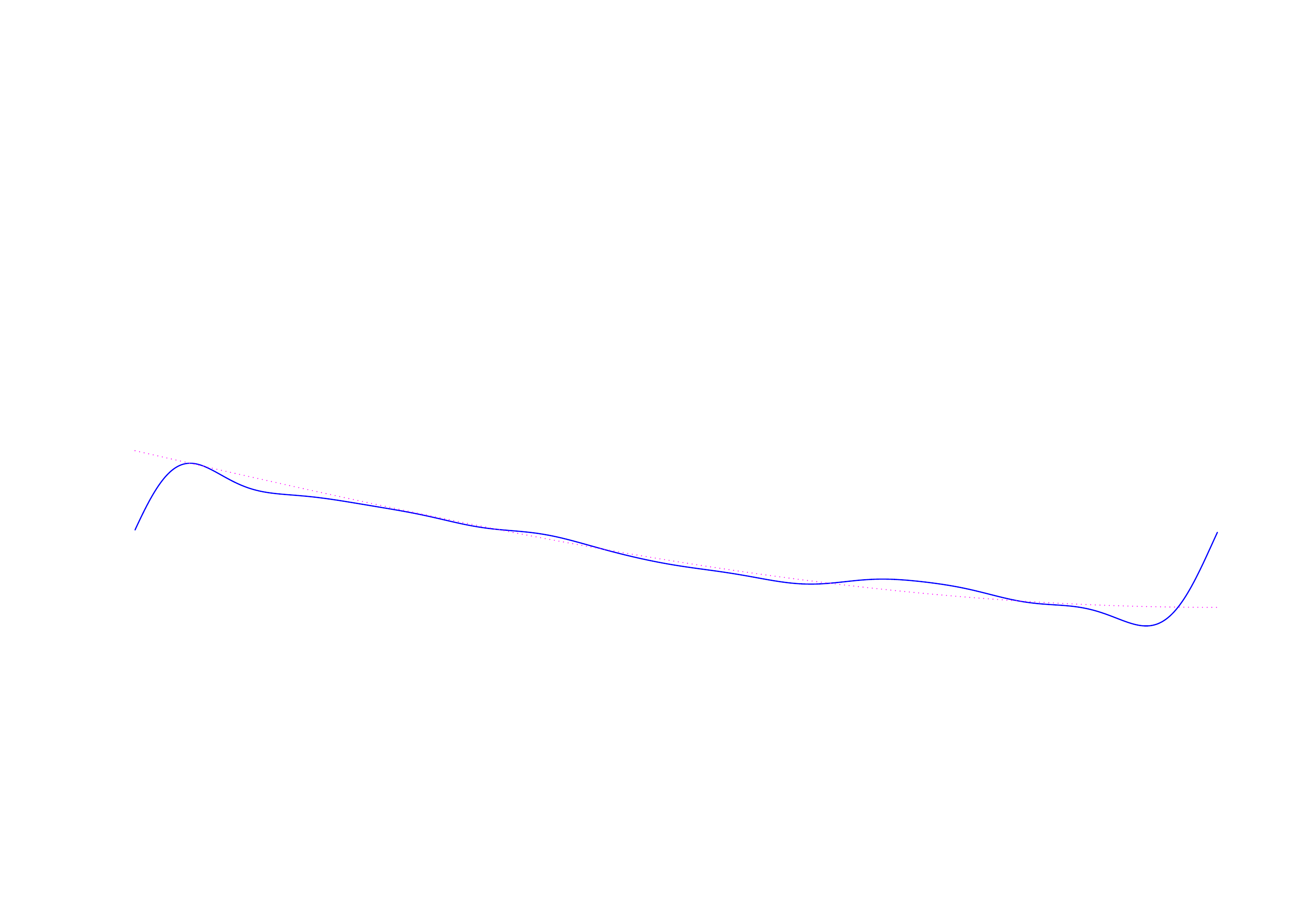}
}
\subfloat[][$\hat{v}_{1,B}^\prime$\\\\\mbox{\!\!\!\!aligned}]{%
\includegraphics[width=3cm, height=3cm]{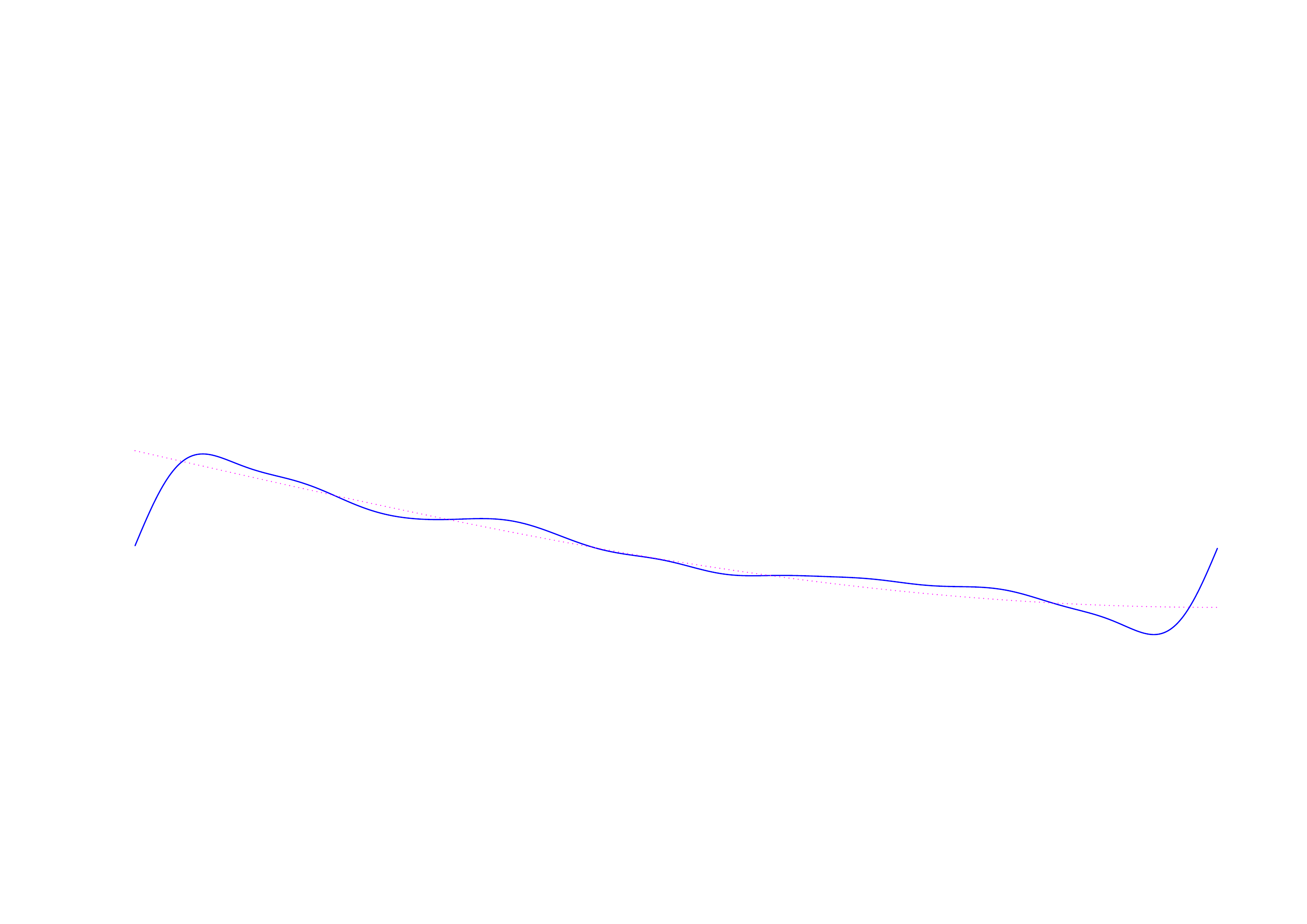}
}
\hspace{0.5cm}
\subfloat[][$\hat{v}_{1}^\prime$\\\\\mbox{\!\!\!\!aligned}]{%
\includegraphics[width=3cm, height=3cm]{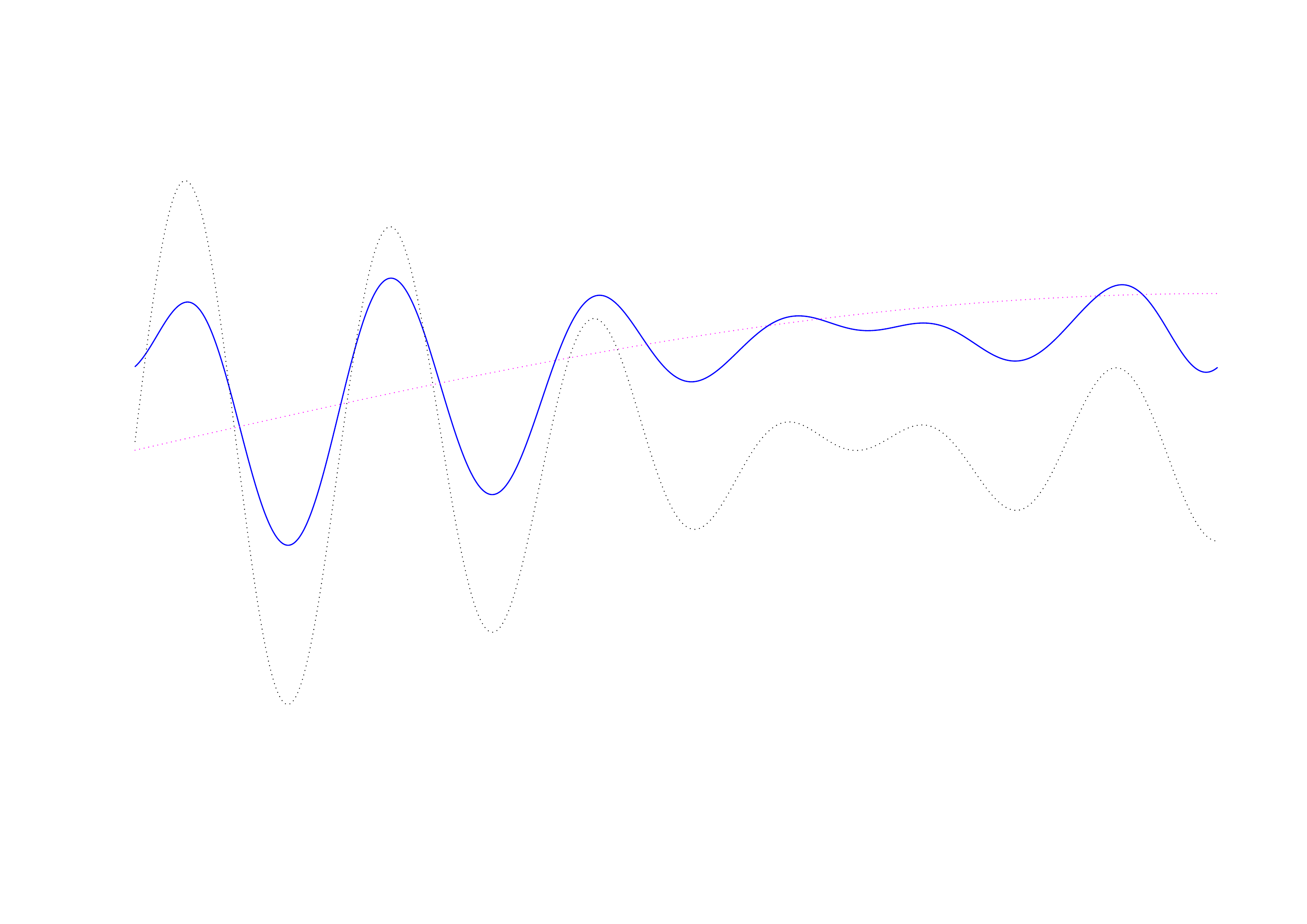}
}
\subfloat[][$\hat{v}_{1,B}^\prime$\\\\\mbox{\!\!\!\!aligned}]{%
\includegraphics[width=3cm, height=3cm]{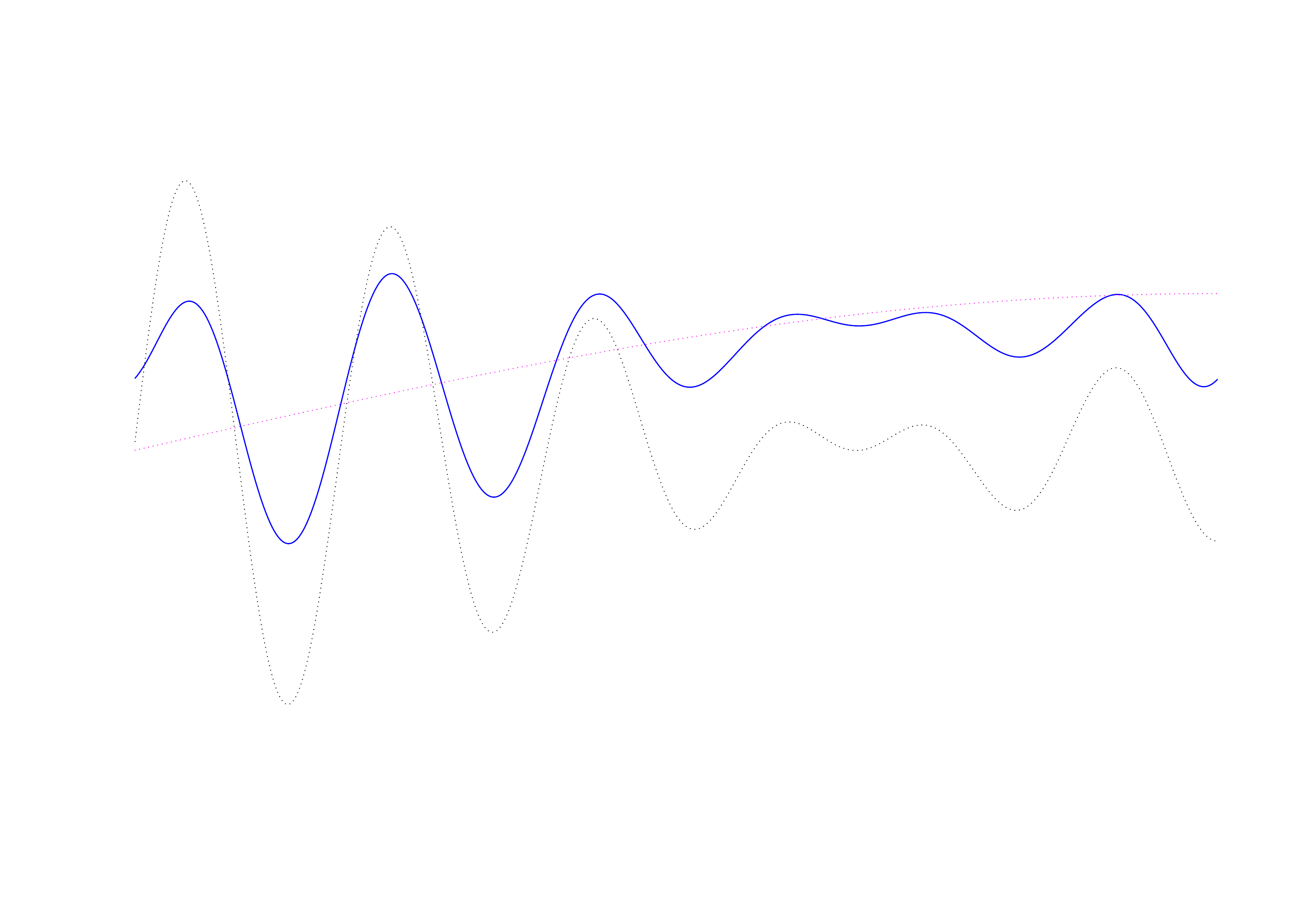}
}
\\ 
\subfloat[][$\hat{v}_{1}$]{%
\includegraphics[width=3cm, height=3cm]{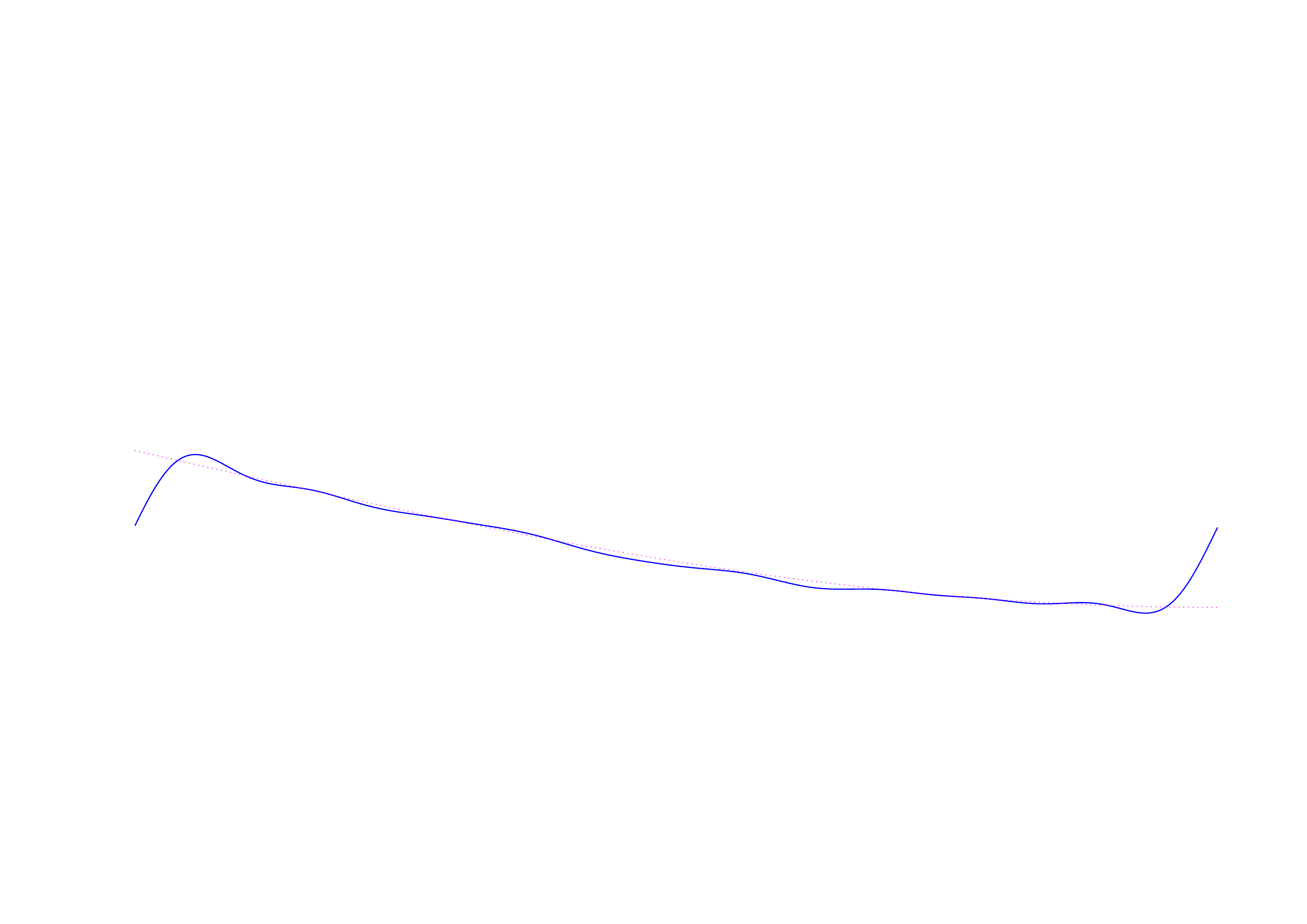}
}%
\subfloat[][$\hat{v}_{1,B}$]{%
\includegraphics[width=3cm, height=3cm]{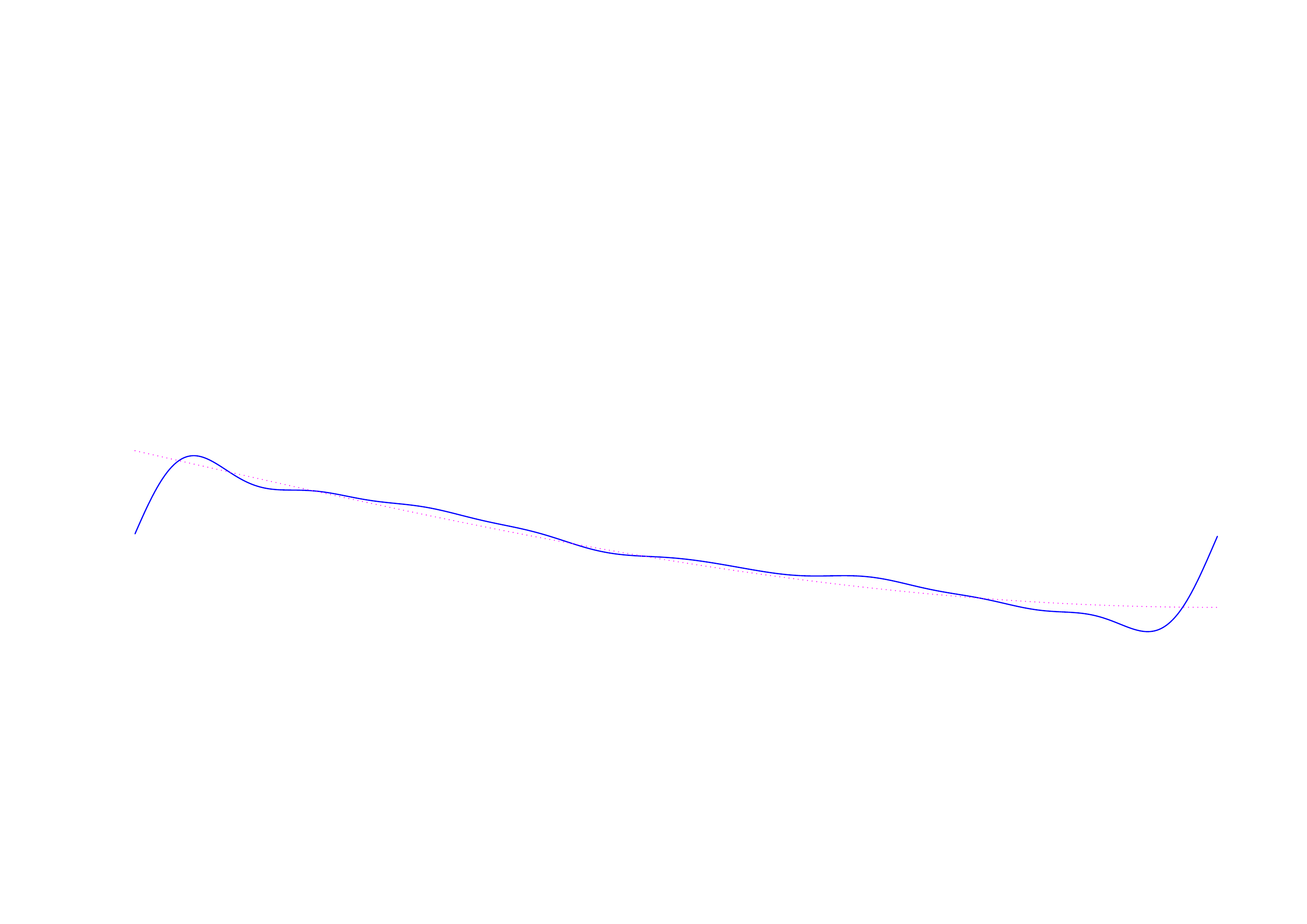}
}%
\hspace{0.5cm}
\subfloat[][$\hat{v}_{1}$]{%
\includegraphics[width=3cm, height=3cm]{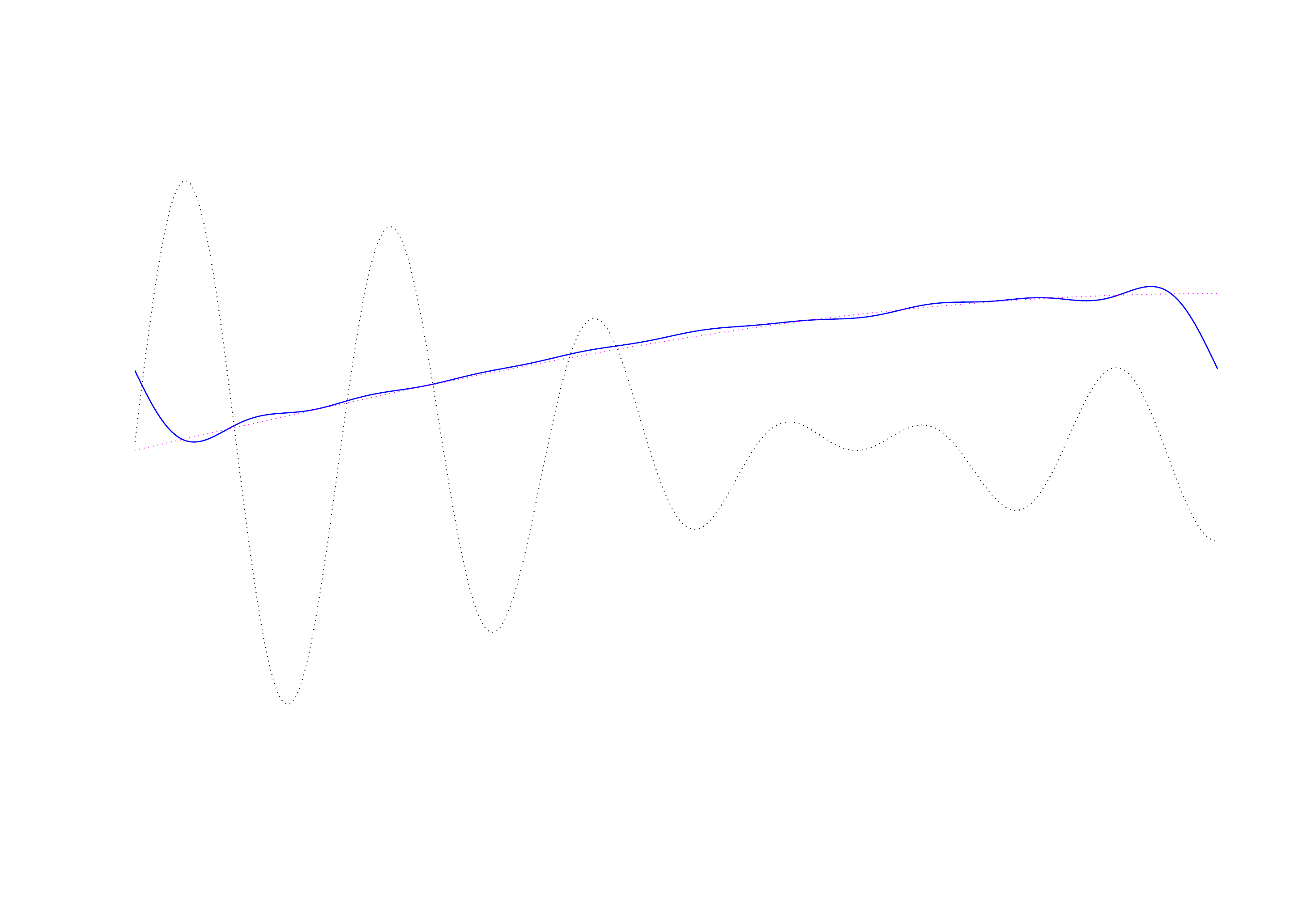}
}%
\subfloat[][$\hat{v}_{1,B}$]{%
\includegraphics[width=3cm, height=3cm]{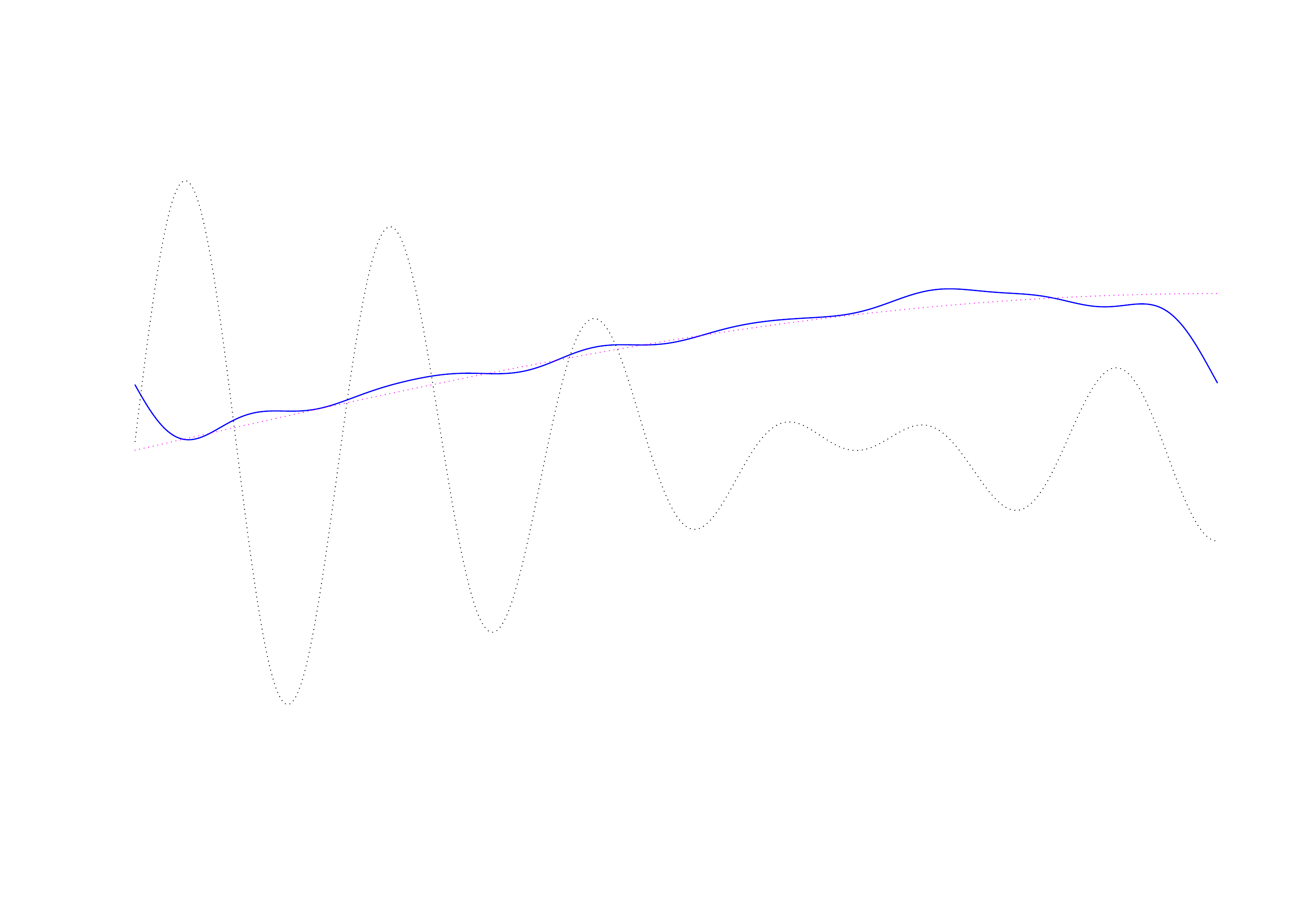}
}%
\\
\vspace{-0.2cm}
\subfloat[][$\hat{v}_{2}$]{%
\includegraphics[width=3cm, height=3cm]{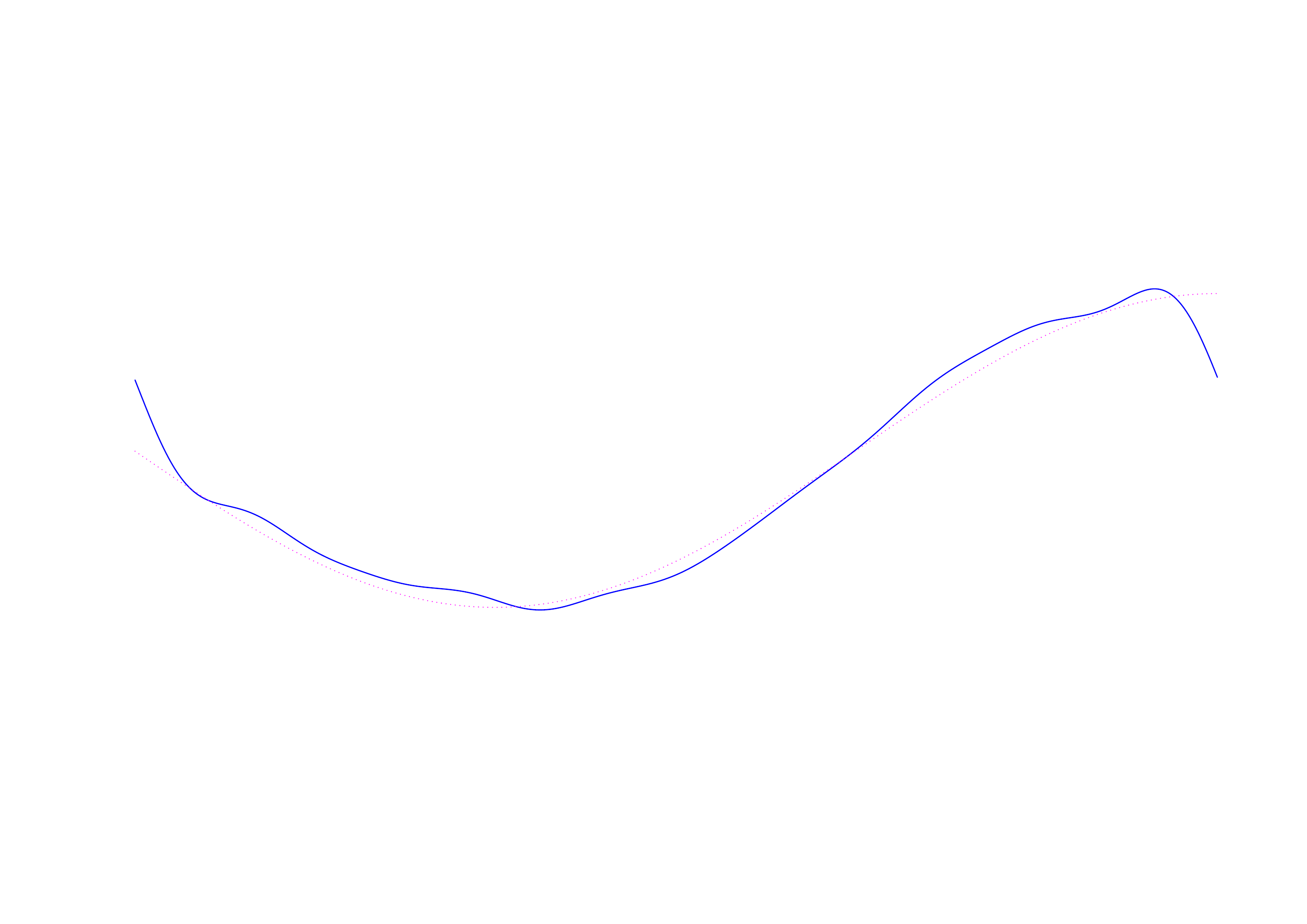}
}%
\subfloat[][$\hat{v}_{2,B}$]{%
\includegraphics[width=3cm, height=3cm]{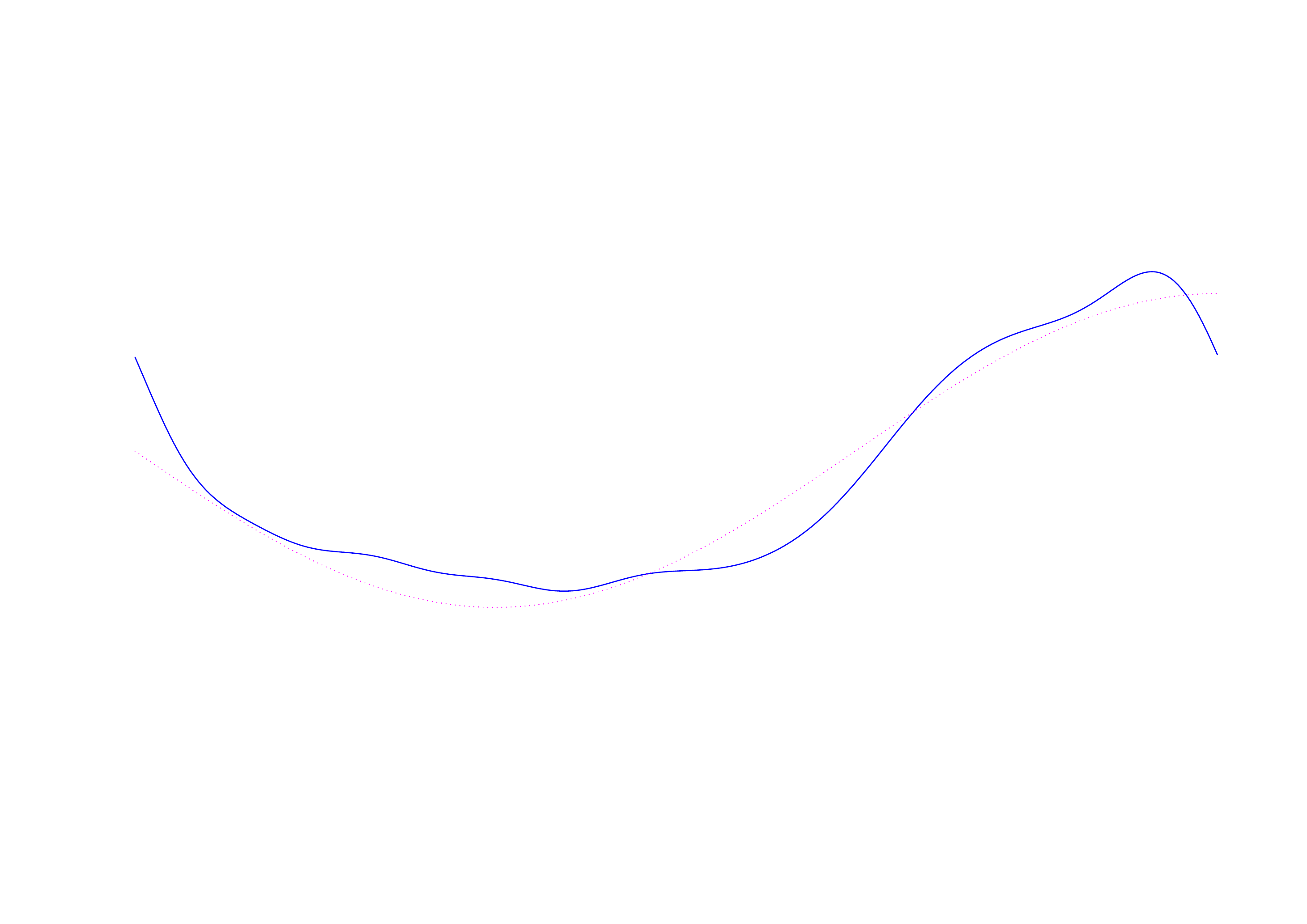}
}%
\hspace{0.5cm}
\subfloat[][$\hat{v}_{2}$]{%
\includegraphics[width=3cm, height=3cm]{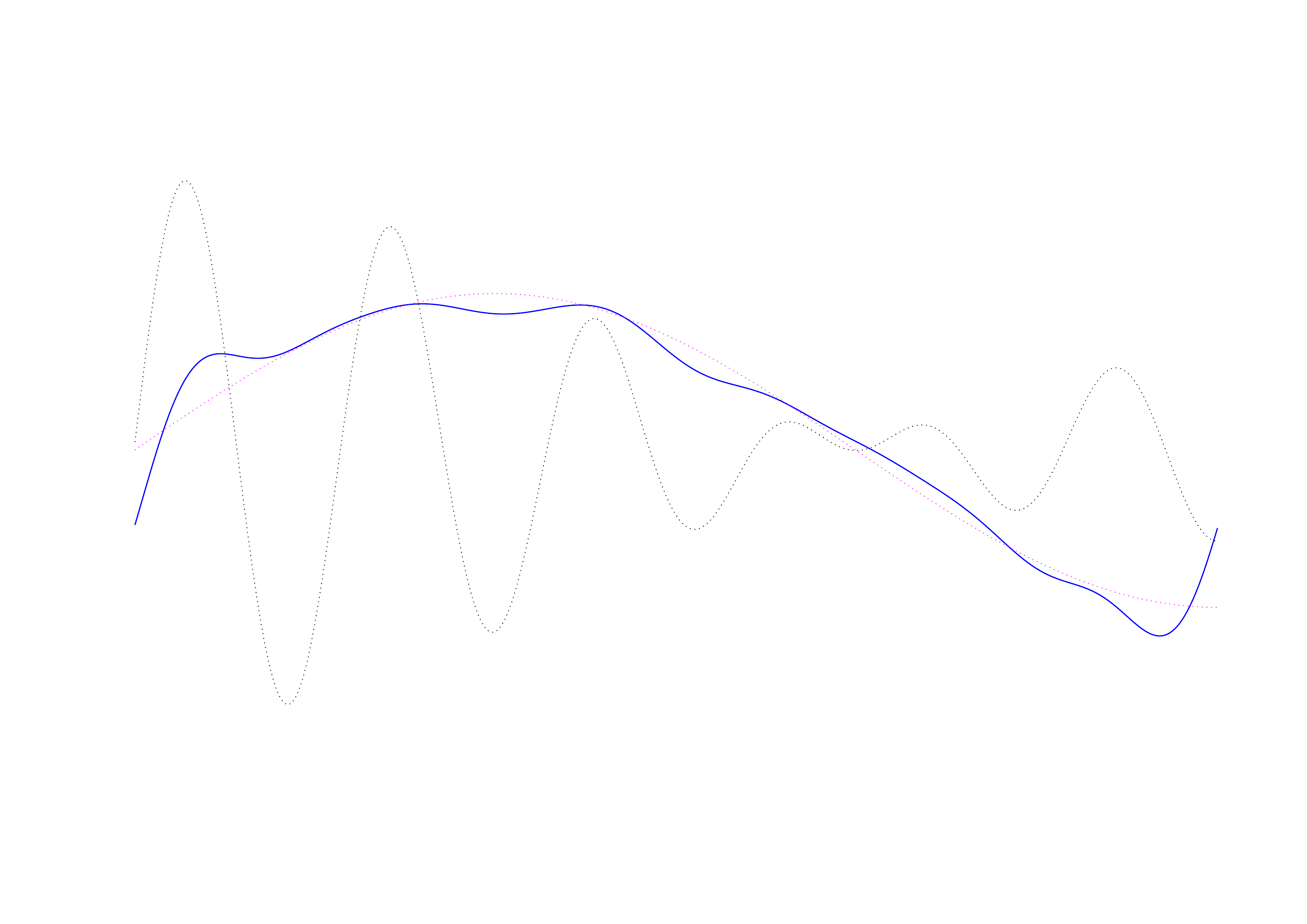}
}%
\subfloat[][$\hat{v}_{2,B}$]{%
\includegraphics[width=3cm, height=3cm]{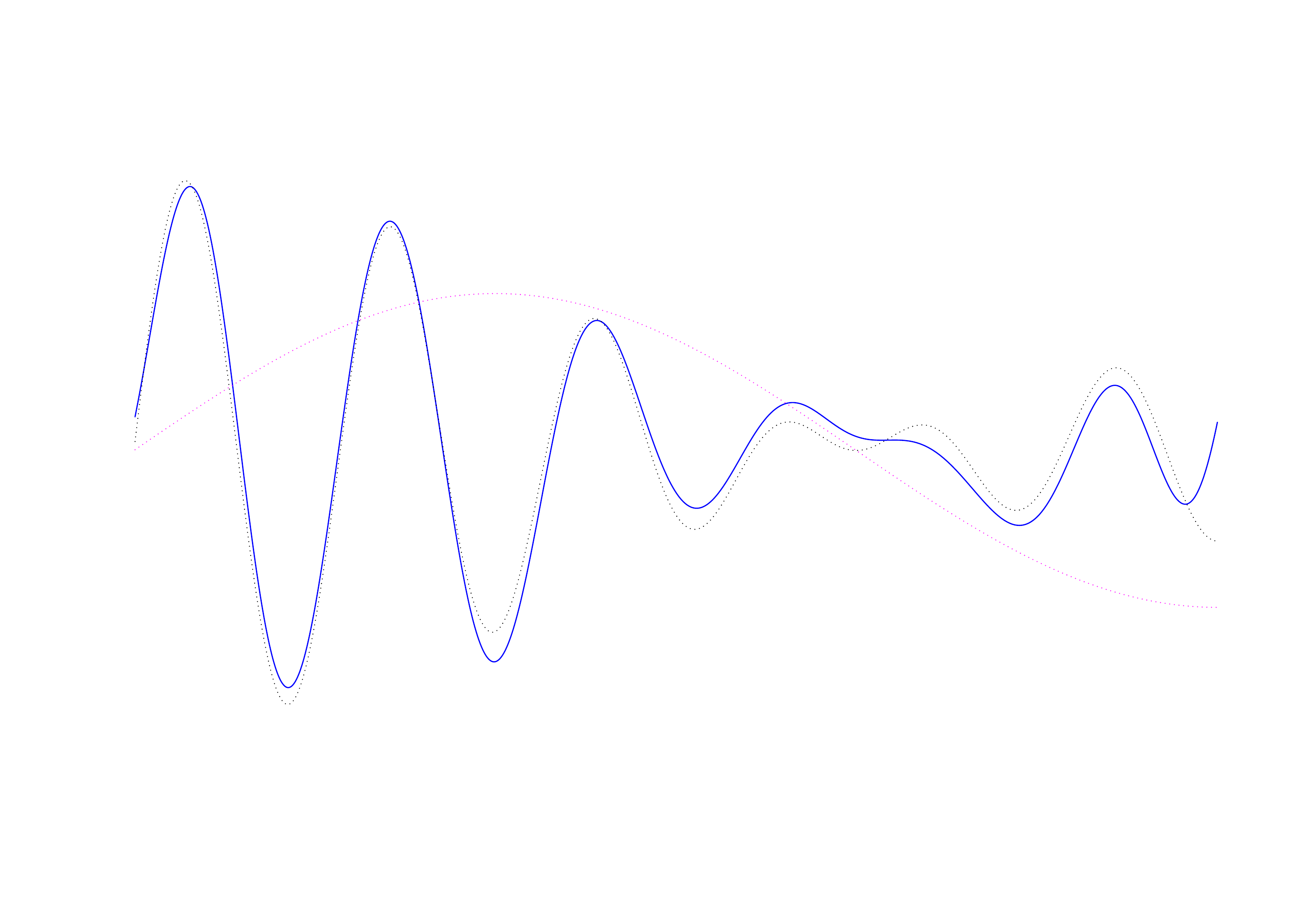}
}%
\\
\vspace{-0.2cm}
\subfloat[][$\hat{v}_{3}$]{%
\includegraphics[width=3cm, height=3cm]{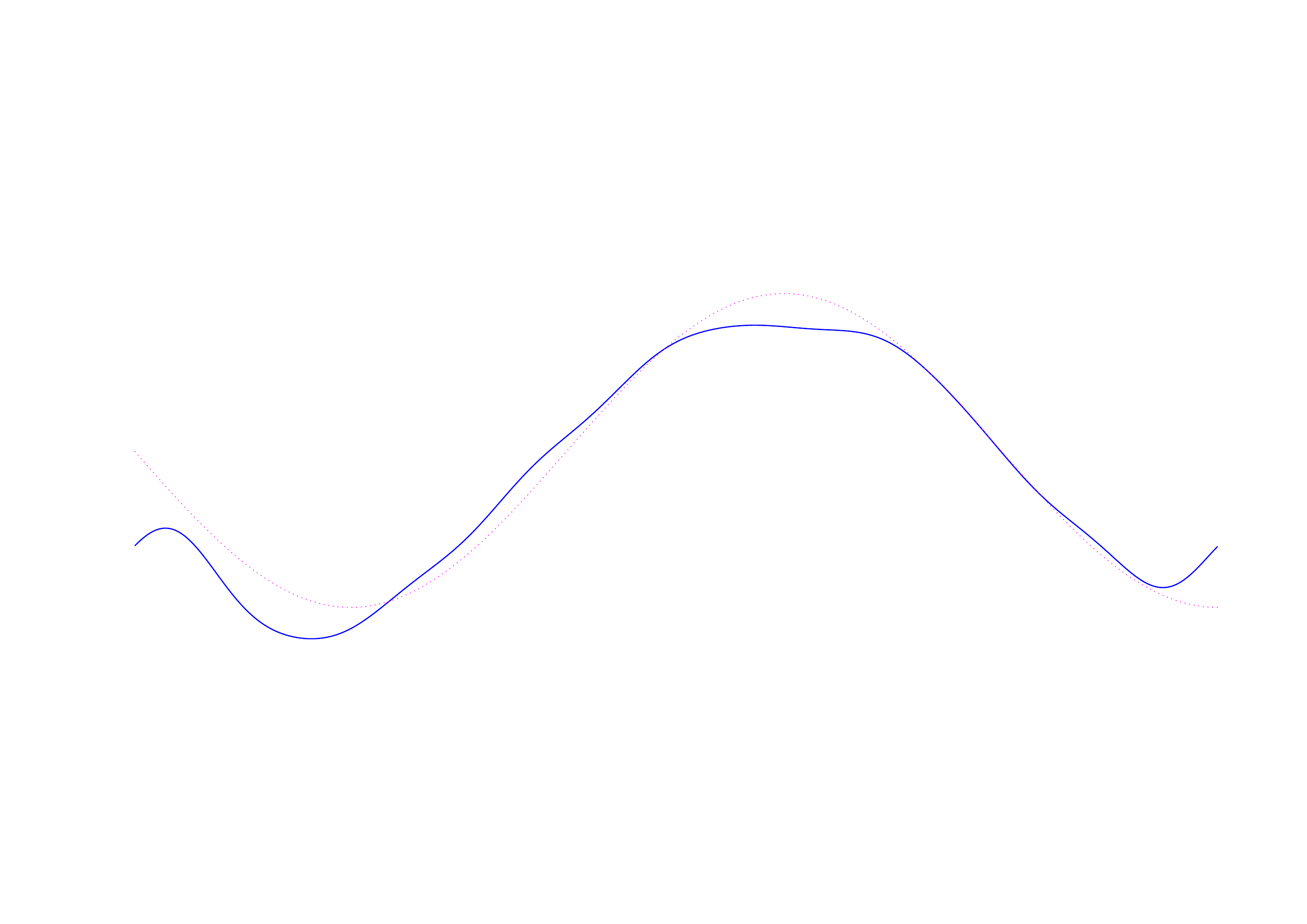}
}%
\subfloat[][$\hat{v}_{3,B}$]{%
\includegraphics[width=3cm, height=3cm]{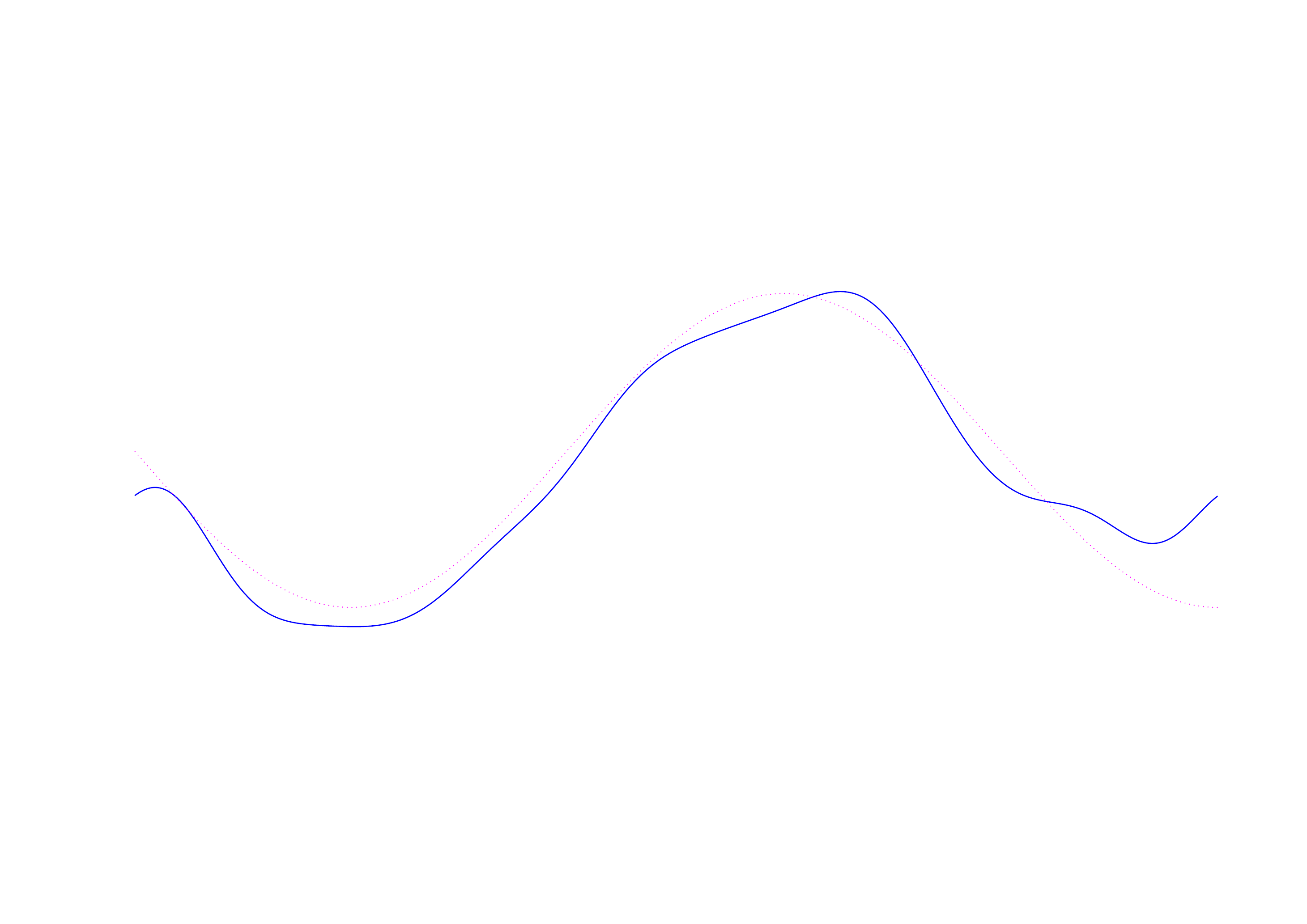}
}%
\hspace{0.5cm}
\subfloat[][$\hat{v}_{3}$]{%
\includegraphics[width=3cm, height=3cm]{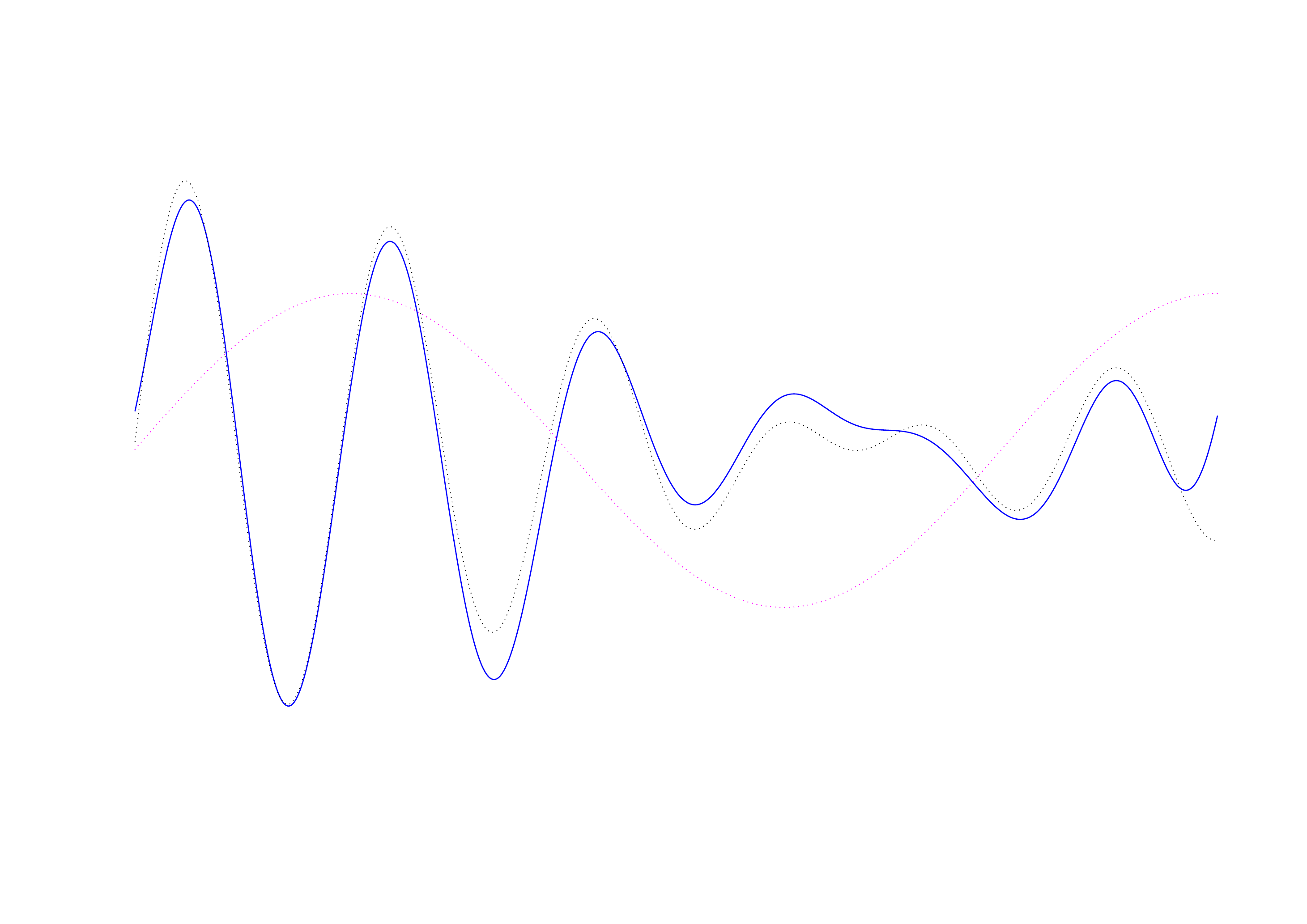}
}%
\subfloat[][$\hat{v}_{3,B}$]{%
\includegraphics[width=3cm, height=3cm]{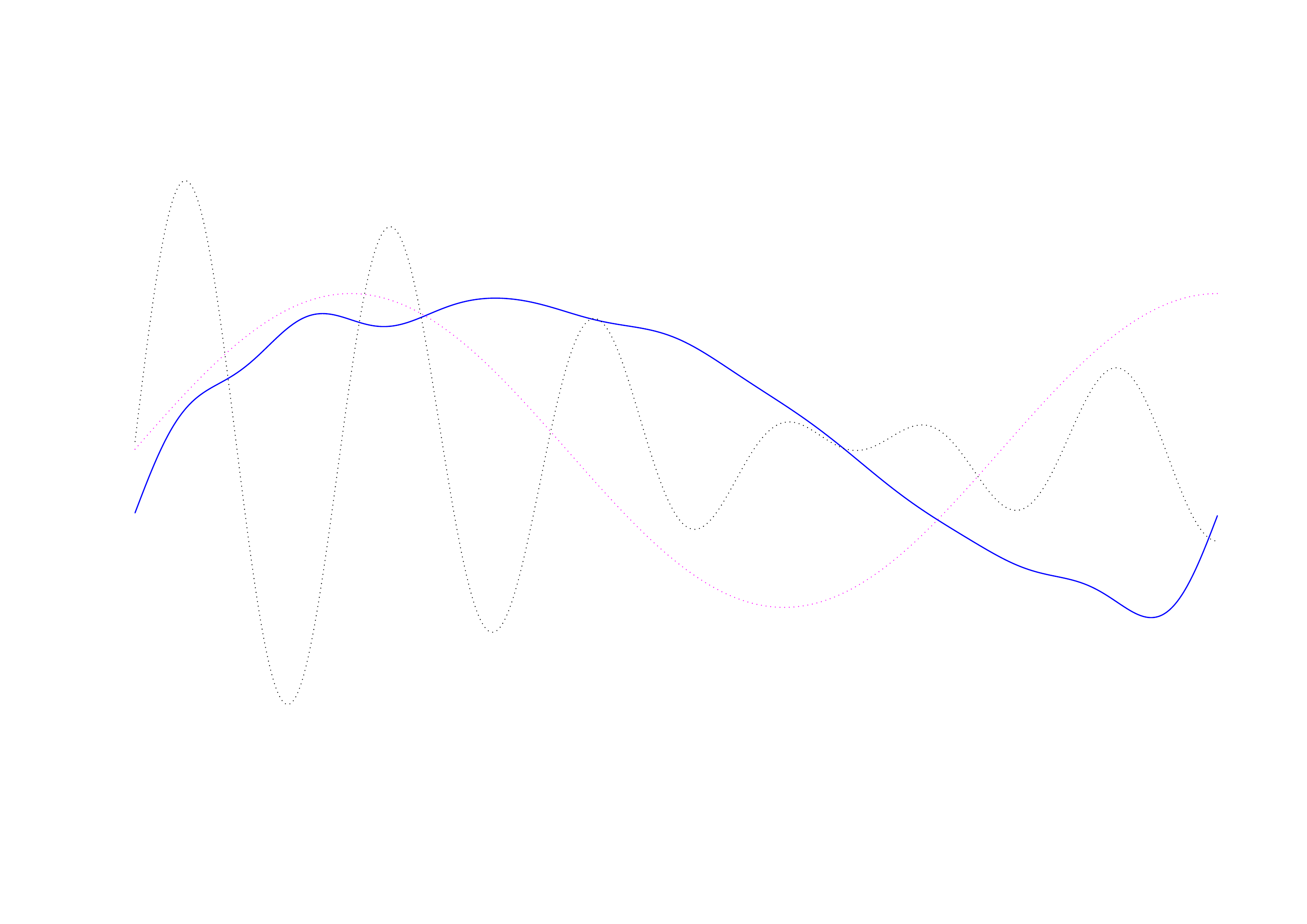}
}%
\\
\vspace{-0.2cm}
\subfloat[][$\hat{v}_{4}\\\\$\mbox{\!\!\!\!\!\!\!\!\!\!$H_0$ standard}]{%
\includegraphics[width=3cm, height=3cm]{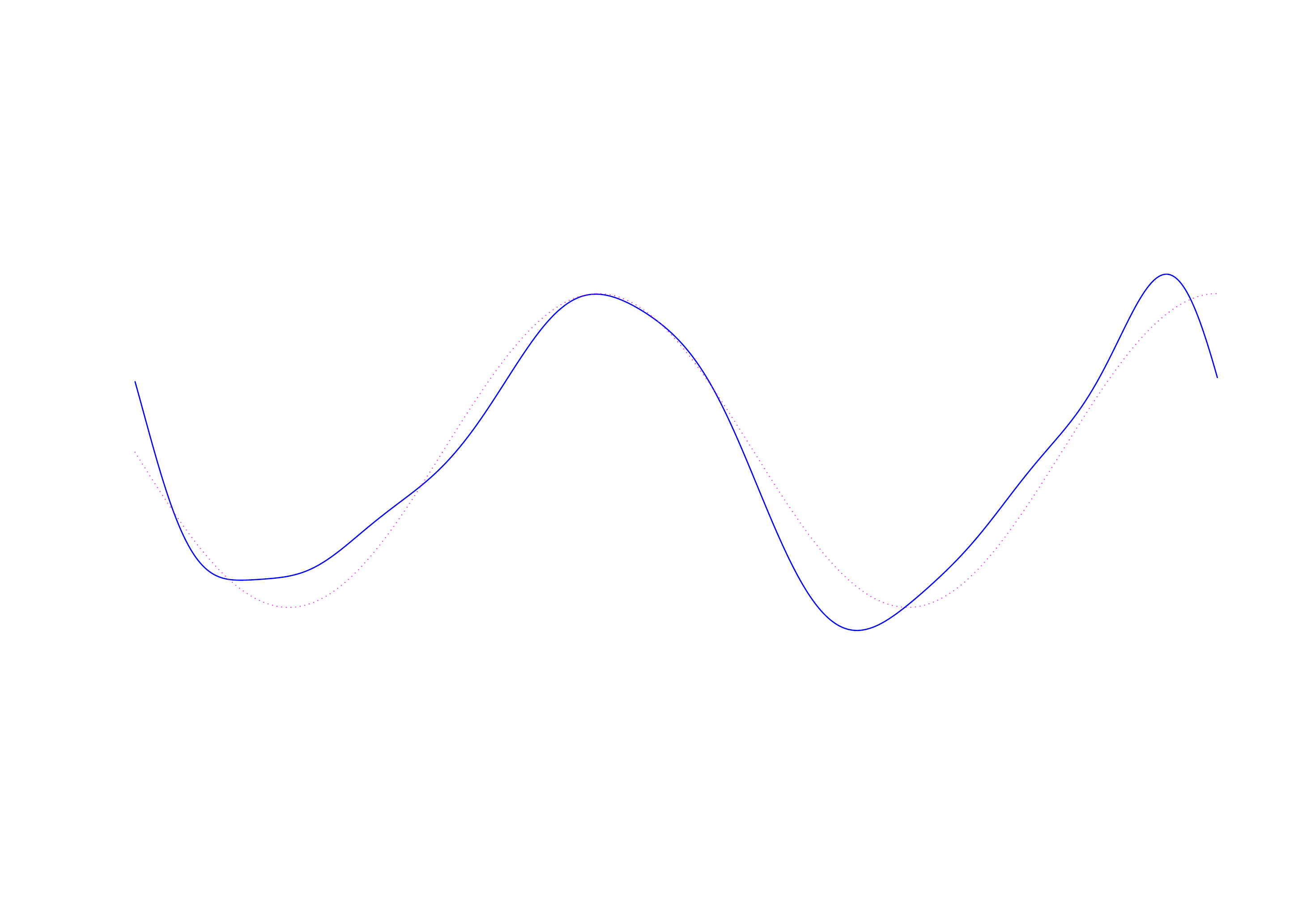}
}%
\subfloat[][$\hat{v}_{4,B}\\\\$\mbox{\!\!\!\!\!\!\!\!\!\!$H_0$ generalized}]{%
\includegraphics[width=3cm, height=3cm]{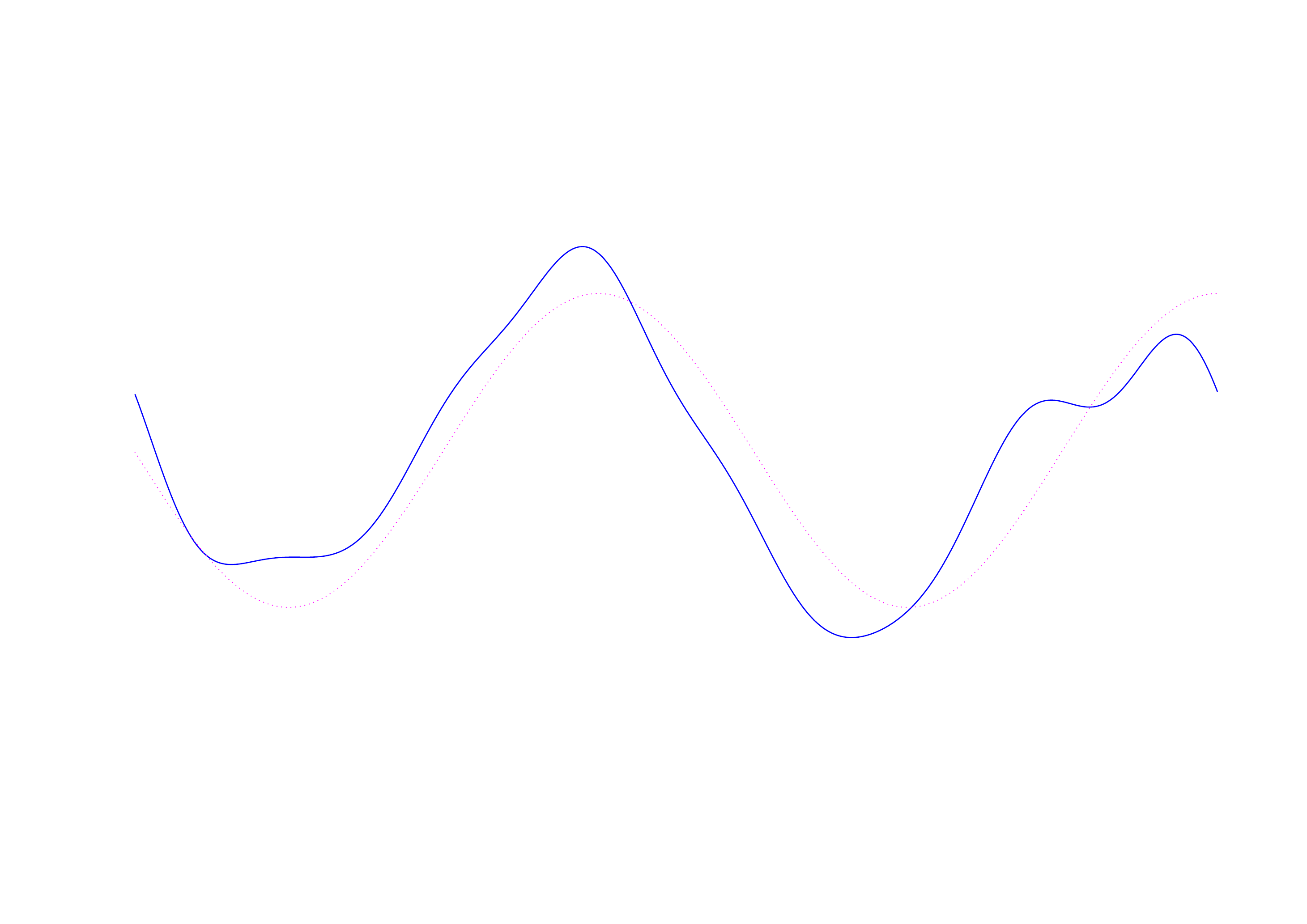}
}%
\hspace{0.5cm}
\subfloat[][$\hat{v}_{4}\\\\$\mbox{\!\!\!\!\!\!\!\!\!\!$H_A$ standard}]{%
\includegraphics[width=3cm, height=3cm]{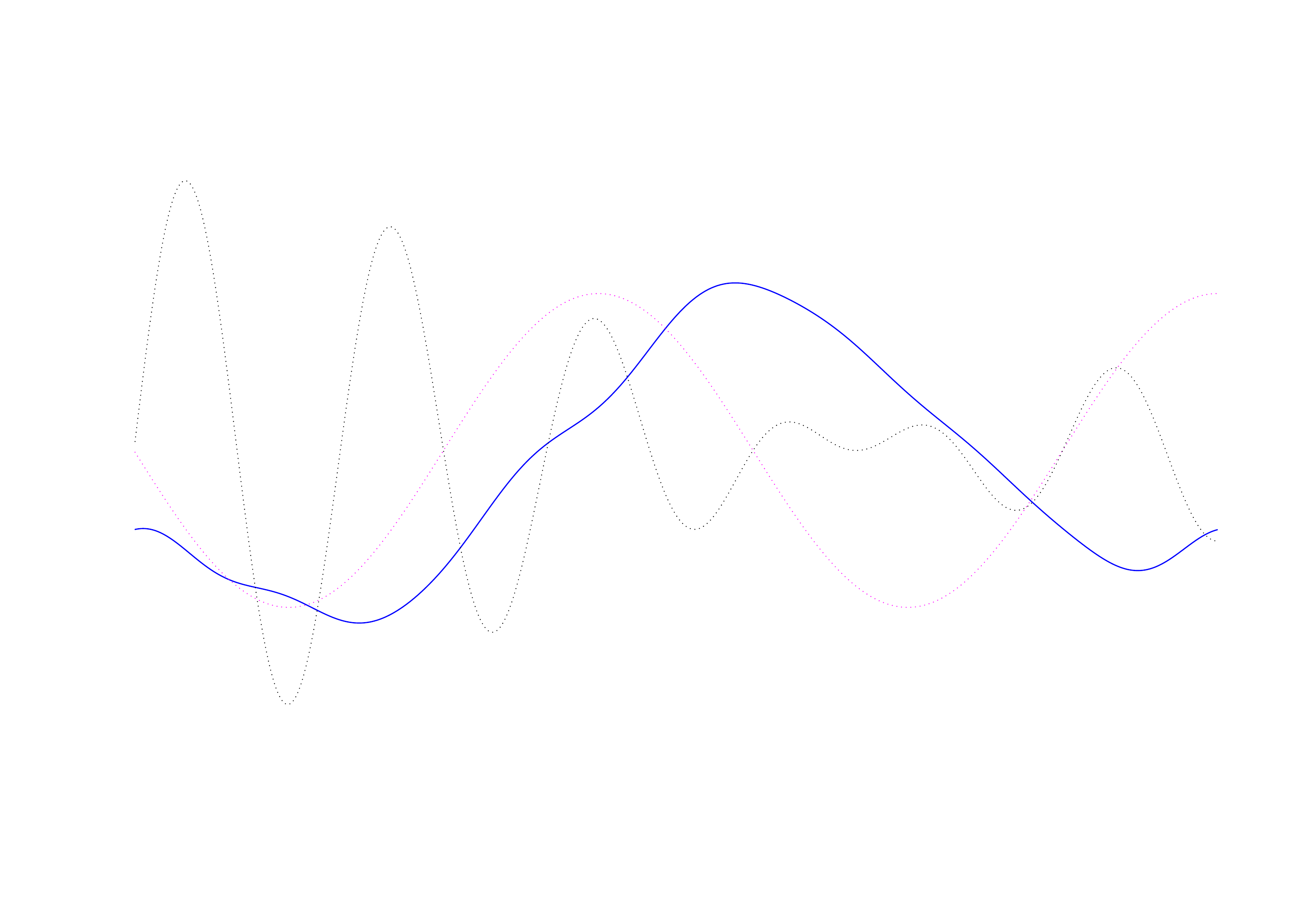}
}%
\subfloat[][$\hat{v}_{4,B}\\\\$\mbox{\!\!\!\!\!\!\!\!\!\!$H_A$ generalized}]{%
\includegraphics[width=3cm, height=3cm]{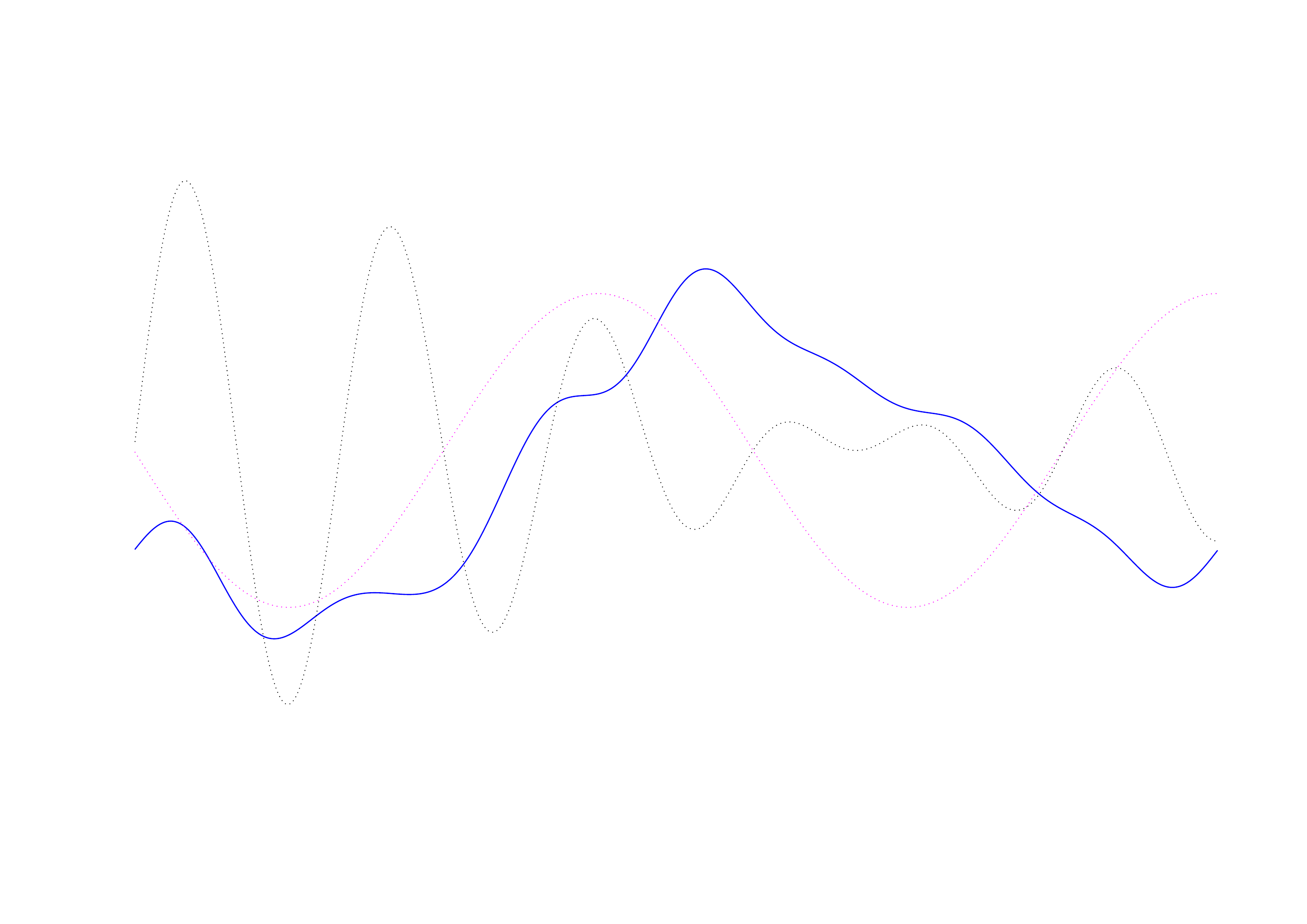}
}%
\caption{Empirical standard principal components $\hat{v}_1,\ldots,\hat{v}_4$ under $H_0$ and $H_A$ (1st and 2nd columns) and empirical generalized principal components $\hat{v}_{1,B},\ldots,\hat{v}_{4,B}$ under $H_0$ and $H_A$ (3rd and 4th columns) in ascending order. 
The dotted red lines indicate the true principal components $v_1,\ldots, v_4$ and the dotted black lines additionally the change direction $\Delta$ under the alternative. The first row shows the aligned 1st components  $\hat{v}_{1}^\prime$ and $\hat{v}_{1,B}^\prime$.}%
\label{fig:princip_comp}
\end{figure}

 The same data is used for \autoref{fig:princip_comp} which shows the four leading empirical standard (or generalized) principal components $\hat{v}_{j}$ and $\hat{v}_{j,B}$ and the corresponding change-aligned first components $\hat{v}_{1}^\prime$ and $\hat{v}_{1,B}^\prime$.
 Those are computed either with $\smash{\hat{\ccC}_0}$ or using a flat-top Bartlett estimate $\smash{\hat{\ccC}_B}$ with kernel $\smash{\ccK(x)=\one_{\{|x|\leq 1\}}(x)}$ and bandwidth $\smash{h=\lfloor n^{1/5}\rfloor}$. 
 For the change-aligned components we choose $\gamma=2/5$.
 In \autoref{fig:princip_comp} we see that that the alignment has only little effect under $H_0$ but a rather strong impact under $H_A$. 
 We also see that the first standard (and the first generalized) principal components are not capturing the large but quite oscillating change - even though the first direction \textit{explains} about $82\%$ $(73\%)$ of the variability in the \textit{change-contaminated} data.  
 This is substantially improved for the aligned counterparts  (in the first row) which move both, as expected, in the right direction. Note that the generalized empirical principal components would also push the change more into the first component and achieve a comparable effect under a larger sample size or a larger bandwidth.

 Next, we compare the performance of the test based on change-aligned components with the usual ones, i.e. with the approach considered in, e.g., \citet{berkes2009functional} and \citet{horvath2013testing}.
 The critical values are based on the limiting distribution \eqref{eq:asymptNull} and may be obtained, e.g., from Table 2 of \citet{kiefer1959}. The reported rejection rates in \autoref{tab:rejectionrates} are based on 1000 repetitions. 
 
 \begin{remark}
 Note that the aligned principal components ensure that the first projection contains \textit{asymptotically} all information on \textit{any} change $\Delta$ (and sufficient information on any changes $\Delta_1, \ldots, \Delta_\varrho$). 
 Thus, we propose to consider only this direction for testing and choose $d=1$ which has the additional advantage that no selection criteria for $d$ is necessary.  As already mentioned in the introduction, one-dimensional projections are also considered by \citet{kirch2014highdim}. The authors indicate in their \textit{Conclusion}-Section that (random) projections related to principal components might be of some further interest. 
 Our construction of $\hat{v}_1^\prime$ can be regarded as an explicit example of such a projection.
 \end{remark}
 \newpage
 In \autoref{tab:rejectionrates} we consider the following settings where we include $sin(t)$ and $t$ because both alternatives were considered in \citet{berkes2009functional}:
 \begin{enumerate}
 \item[A.]  $H_0$,
 \item[B.] $\Delta(t)=c\sin(t)$ and $g=\frac{1}{3}g_{[1/2,1/2]}$,
\item[C.] $\Delta(t)=v_{10}(t)$ and $g=\frac{1}{2}g_{[1/2,1/2]}$,
 \item[D.]  $\Delta(t) =c t$ and $g=\frac{1}{4}g_{[1/3,2/3]}$,
\item[E.] $\Delta(t) =c \cos(t)$ and $g=\frac{1}{3}g_{[1/3,2/3]}$,
\item[F.] $\Delta_1(t)=v_{10}(t)$, $\Delta_2(t)=v_{15}(t)$ and $g_1=\frac{1}{8^{1/2}}g_{[3/5,1]}$, $g_2=\frac{1}{8^{1/2}}g_{[1/3,2/3]}$.
 \end{enumerate}
 The constant $c$ is always chosen such that $\Delta$ is normalized. The scaling of the trend functions is always chosen such that the increase in the power is visible when the sample size $n$ increases. 
 We see in \autoref{tab:rejectionrates} that the size remains stable under alignment (column A) and that the power is not affected whenever it is already high without alignment (columns B, D and E) but may substantially increase if it is not (columns C and F).

\begin{table}
\centering
{\small
\begin{tabular}{rrrrrrrrrrrrr} 
  & $\hat{v}_1$  &  $\hat{v}_1^{\prime}$ & $\hat{v}_{1,B}$ &  $\hat{v}_{1,B}^{\prime}$  & $\hat{v}_1$  &  $\hat{v}_1^{\prime}$ & $\hat{v}_{1,B}$ &  $\hat{v}_{1,B}^{\prime}$  & $\hat{v}_1$  &  $\hat{v}_1^{\prime}$ & $\hat{v}_{1,B}$ &  $\hat{v}_{1,B}^{\prime}$  \\
  \cmidrule(r){2-5} \cmidrule(r){6-9}\cmidrule(r){10-13}
\\ 
$n$ & \multicolumn{4}{r}{\textbf{A}} & \multicolumn{4}{r}{B} & \multicolumn{4}{r}{\textbf{C}}\\
\cmidrule(r){1-1} \cmidrule(r){2-5} \cmidrule(r){6-9}\cmidrule(r){10-13} 

$100$  & \textbf{7.9}& \textbf{10.0} & \textbf{7.3} & \textbf{7.9} & 69.5& 70.6 &66.0 & 67.2& \textbf{11.6}& \textbf{96.9} & \textbf{64.7} & \textbf{96.4} \\
$200$	& \textbf{8.5}&\textbf{8.1} & \textbf{8.7} & \textbf{9.1} &95.1& 95.4 & 94.1 &94.4 &  \textbf{13.6}& \textbf{100} & \textbf{65.7} & \textbf{100}\\
$300$  & \textbf{9.5}& \textbf{9.9} & \textbf{9.4} & \textbf{10.5} &99.0  & 99.1 & 99.0 & 99.0&   1\textbf{3.6}& \textbf{100} & \textbf{92.7} & \textbf{100} \\
$400$ & \textbf{9.2}& \textbf{8.3} & \textbf{8.4}&  \textbf{8.9} & 99.8 & 99.8 & 99.0 & 99.0 &\textbf{14.3}& \textbf{100} & \textbf{95.5} & \textbf{100}\\
$500$ & \textbf{9.9}& \textbf{9.0} & \textbf{9.1} & \textbf{9.4} & 100 & 100 & 100 &100& \textbf{13.8} & \textbf{100} & \textbf{96.6} & \textbf{100}\\
\\
 
& \multicolumn{4}{r}{D} & \multicolumn{4}{r}{E} & \multicolumn{4}{r}{\textbf{F}}\\  
  \cmidrule(r){2-5} \cmidrule(r){6-9}\cmidrule(r){10-13}
$100$ &  41.3& 44.1& 40.6 & 42.0 & 52.8& 57.1 & 52.9 & 56.6 & \textbf{10.7}& \textbf{36.9} & \textbf{24.9} & \textbf{51.6}\\
$200$ &  70.0& 68.0 & 64.8 & 65.9 & 78.7& 86.5 & 80.6 & 87.4 &\textbf{9.5}& \textbf{87.4} & \textbf{24.3} & \textbf{90.1}\\
$300$ &  85.2& 87.3 & 85.7 & 86.3 & 91.7& 97.3 & 93.1 & 97.7 & \textbf{13.2}& \textbf{100}& \textbf{41.3} & \textbf{100}\\
$400$ &  94.3& 94.1 & 93.2 & 93.5 & 96.6 & 99.6 & 98.0 &99.6 & \textbf{9.6} & \textbf{100} &\textbf{42.4} & \textbf{100}\\
$500$ &  97.8& 98.2 & 97.3 & 97.5 & 99.1 & 99.8 & 99.6 & 99.8 & \textbf{11.6}& \textbf{100} & \textbf{40.8} & \textbf{100}\\
\end{tabular} 
}
\caption{Empirical rejection rates in percent. The critical value is chosen such that $10\%$ are rejected asymptotically under $H_0$.}
\label{tab:rejectionrates}
\end{table}

\section*{Conclusion}
From a theoretical point of view, we reduced the assumptions on the standard CUSUM procedure in a functional data setting under the null and under the alternative hypotheses in a flexible \textit{change in the mean} framework.
From a practical point of view, we proposed change-aligned principal components and demonstrated that they are useful modifications of the common principal component approach.
 
 \newpage
\section{Proofs}\label{sec:proofs}

\begin{proof}[Proof of \autoref{prop:bounds}] 
First, we define the \textit{mixed} operator
\[
	\hat{\ccC}^{(d)\prime}= \sum_{j=1}^d \lambda_j^{-1/2} (\hat{v}_j \otimes \hat{v}_j)
\]
and observe by the mean value theorem that $4(x^{-1/2}-y^{-1/2})^2 \leq (x-y)^2/(\min(x,y))^3$ for $x,y> 0$. Hence, slightly modifying the proofs of Lemmas 3.1 and 3.2 of \citet{reimherr}, we get
\[
	\|\ccC^{(d)} - \hat{\ccC}^{(d)\prime}\|^2_{\cS} \leq \frac{4d\|\ccC - \hat{\ccC}\|_{\cS}^2}{\lambda_{d}}\bigg(\frac{1}{(\lambda_{d}-\lambda_{d+1})^2}+ \frac{1}{(\hat{\lambda}_{d}-\hat{\lambda}_{d+1})^2}+\frac{1}{\lambda_{d}^2}\bigg)
\]
and
\[
	\|\hat{\ccC}^{(d)\prime} - \hat{\ccC}^{(d)}\|_{\cS}^2 \leq d \|\ccC - \hat{\ccC}\|_{\cS}^2/(\hat{\lambda}_{d}\lambda_{d})^{3}.
\] 
Since $\|\hat{\ccC}-\ccC\|_{\cS}=o_P(1)$ we know that $\hat{\lambda}_d=\lambda_d+ o_P(1)$ and $\hat{\lambda}_{d+1}=\lambda_{d+1}+ o_P(1)$ and the proof is complete since $\lambda_d>\lambda_{d+1}\geq 0$.
\end{proof} 

\begin{proof}[Proof of \autoref{thm:convH0}]
It holds under the null hypothesis that
\begin{align*}
&|\max_{1\leq k <n}|S_k(\bbeta)|_{\Sigma} - \max_{1\leq k <n} |S_k(\hat{\bbeta})|_{\hat{\Sigma}}|\\
&= |\max_{1\leq k <n}|S_k(\bvarepsilon)|_{\Sigma} - \max_{1\leq k <n} |S_k(\hat{\bvarepsilon})|_{\hat{\Sigma}}|\\
&=|\max_{1\leq k <n}\|\ccC^{(d)} S_k (\varepsilon) \| -  \max_{1\leq k <n} \|\hat{\ccC}^{(d)} S_k(\varepsilon)\|| \\
&\leq \max_{1\leq k <n} \|[\ccC^{(d)} - \hat{\ccC}^{(d)}]S_k(\varepsilon)\|\\
&\leq \|\ccC^{(d)} - \hat{\ccC}^{(d)}\|_\cL \max_{1\leq k <n}\|S_k(\varepsilon)\|\leq \|\ccC^{(d)} - \hat{\ccC}^{(d)}\|_\cS \max_{1\leq k <n}\|S_k(\varepsilon)\|,
\end{align*}
for some $c>0$, where $\|\cdot\|_{\cL}$ is the operator norm. The assertion follows on using \autoref{prop:bounds}.
\end{proof}

To investigate the behavior under the alternative we need a weighted law of large numbers. The proof is obvious and thus omitted.
\begin{proposition}\label{diss:prop:weaklpm}		
Let $\{\varepsilon_i\}_{i\in\mZ}$ be an \,\(H\)-valued, centered time series and let $\{a_{n,i}\}_{n,i\in\mN}$ be an array of scalars such that $\max_{1\leq i\leq n}|a_{n,i}|=\cO(n^{-1})$.
Given $\sum_{i,j=1}^n |E\langle \varepsilon_i,\varepsilon_j\rangle|=\cO(n)$ it holds that $E\|\sum_{i=1}^n a_{n,i}\varepsilon_i\|^2=\cO(n^{-1})$. 
\end{proposition}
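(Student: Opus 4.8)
The plan is to expand the squared Hilbert space norm by means of the inner product and thereby reduce the assertion to a direct estimate of a weighted double sum of the scalar covariances $E\langle\varepsilon_i,\varepsilon_j\rangle$. First I would invoke bilinearity of $\langle\cdot,\cdot\rangle$ together with the fact that only finitely many terms are involved to obtain
\[
E\Big\|\sum_{i=1}^n a_{n,i}\varepsilon_i\Big\|^2
= E\Big\langle \sum_{i=1}^n a_{n,i}\varepsilon_i, \sum_{j=1}^n a_{n,j}\varepsilon_j\Big\rangle
= \sum_{i,j=1}^n a_{n,i}a_{n,j}\,E\langle\varepsilon_i,\varepsilon_j\rangle .
\]
Interchanging the expectation with the finite double sum is unproblematic, since each summand is integrable: by Cauchy--Schwarz and strict stationarity one has $|E\langle\varepsilon_i,\varepsilon_j\rangle|\leq E\|\varepsilon_0\|^2<\infty$.

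The second step is the actual estimate. Passing to absolute values and pulling the coefficients out of the sum gives
\[
\Big|\sum_{i,j=1}^n a_{n,i}a_{n,j}\,E\langle\varepsilon_i,\varepsilon_j\rangle\Big|
\leq \Big(\max_{1\leq i\leq n}|a_{n,i}|\Big)^{2}\sum_{i,j=1}^n |E\langle\varepsilon_i,\varepsilon_j\rangle| .
\]
At this point I would simply insert the two hypotheses: the coefficient bound yields $(\max_{1\leq i\leq n}|a_{n,i}|)^{2}=\cO(n^{-2})$, whereas the covariance summability assumption yields $\sum_{i,j=1}^n|E\langle\varepsilon_i,\varepsilon_j\rangle|=\cO(n)$. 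Multiplying these two orders produces $\cO(n^{-2})\cdot\cO(n)=\cO(n^{-1})$, which is precisely the claimed bound.

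There is no genuine obstacle here, which is why the paper regards the proof as obvious: the argument is nothing more than a second-moment computation over a finite index set. The only points deserving (minimal) attention are the justification of the expectation--sum interchange and the use of $E\|\varepsilon_1\|^2<\infty$ to guarantee finiteness of the individual covariances; both are immediate and require no dependence structure beyond the stated summability $\sum_{i,j=1}^n |E\langle\varepsilon_i,\varepsilon_j\rangle|=\cO(n)$.
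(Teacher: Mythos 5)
Your proof is correct and is precisely the second-moment computation the paper has in mind when it declares the proof ``obvious and thus omitted'': expand the squared norm bilinearly, bound the coefficients by their maximum, and multiply $\cO(n^{-2})$ by $\cO(n)$. The only cosmetic remark is that you do not need strict stationarity to justify finiteness of the individual covariances --- it already follows from the assumed bound on $\sum_{i,j=1}^n |E\langle\varepsilon_i,\varepsilon_j\rangle|$ (or directly from Cauchy--Schwarz and $E\|\varepsilon_i\|^2<\infty$).
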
	 
The assumption of \autoref{diss:prop:weaklpm} on the noise is fulfilled under $\ccL^2$-$m$-approx\-imability according, e.g., to \citet{lukasz2015}. 
\\
\\
Subsequently, we make use of the following functions
\begin{align} 
	\ccG(g) &= \int_0^1 g^2(y)dy - \Big(\int_0^1 g(y) dy \Big)^2,\\
	\intertext{and}
	\cG(x) &= \int_0^x g(y)dy - x \int_0^1 g(y)dy.  \label{eq:integralsup} 
\end{align}
Furthermore, for convenience, we set 
\begin{equation}\label{eq:defbetas}
	\beta_{n,i}=g(i/n)-\int_0^1g(x)dx
\end{equation}
and recall that under $H_A$ the function $g$ is non-constant in which case $\ccG(g)>0$ and 
\begin{equation}\label{eq:cGlarge}
	\ccS:=\sup_{0\leq x\leq 1}|\cG(x)|>0,
\end{equation}
as well. For the sake of a more compact presentation we restrict ourselves to Lipschitz continuous trend alternatives. 
The piecewise case may be treated similarly.
\begin{lem}\label{diss:lem:coefficient_convergence}		
Assume that $g$ is Lipschitz continuous and set the $\beta_{n,i}$ according to \eqref{eq:defbetas}. It holds that 
\[
	\lim_{n\rightarrow\infty}\max_{0\leq r\leq h}\Big|\frac{1}{n}\sum_{i=1}^{n-r}\beta_{n,i}\beta_{n,i+r}-\ccG(g)\Big|=0, 
\]
where  $h\rightarrow\infty$ with $h=o(n)$, as $n\rightarrow\infty.$\!
\end{lem}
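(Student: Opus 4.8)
The plan is to recognize that $\ccG(g)$ is itself the integral of a squared centered function and that the discrete double sum is a slightly shifted Riemann sum of the very same quantity, the shift being asymptotically negligible because $r/n\le h/n=o(1)$ uniformly over the admissible range. Concretely, I would first rewrite the target constant as $\ccG(g)=\int_0^1(g(y)-\bar g)^2\,dy$, where $\bar g=\int_0^1 g(y)\,dy$, using $\int_0^1(g-\bar g)^2=\int_0^1 g^2-\bar g^2$. Setting $\tilde g(y):=g(y)-\bar g$, we have $\beta_{n,i}=\tilde g(i/n)$. Since $g$ is Lipschitz on the compact interval $[0,1]$, the function $\tilde g$ is bounded, say $|\tilde g|\le M$, and inherits the Lipschitz constant $L$ of $g$.

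The core of the argument is a triangle-inequality split of $\bigl|\tfrac1n\sum_{i=1}^{n-r}\tilde g(i/n)\tilde g((i+r)/n)-\ccG(g)\bigr|$ into three pieces, each bounded uniformly in $0\le r\le h$. \emph{First}, I would replace the shifted factor $\tilde g((i+r)/n)$ by the unshifted $\tilde g(i/n)$; the Lipschitz property gives $|\tilde g((i+r)/n)-\tilde g(i/n)|\le Lr/n\le Lh/n$, so the induced error is at most $MLh/n$, which vanishes because $h=o(n)$. \emph{Second}, I would complete the truncated sum $\sum_{i=1}^{n-r}\tilde g(i/n)^2$ to the full sum $\sum_{i=1}^{n}\tilde g(i/n)^2$: the omitted tail consists of at most $r\le h$ terms, each bounded by $M^2$, and thus contributes at most $M^2 h/n=o(1)$. \emph{Third}, the remaining quantity $\bigl|\tfrac1n\sum_{i=1}^{n}\tilde g(i/n)^2-\int_0^1\tilde g^2(y)\,dy\bigr|$ is the standard Riemann-sum error for the Lipschitz integrand $\tilde g^2$ (a product of bounded Lipschitz functions is Lipschitz), and is of order $\cO(1/n)$, independently of $r$.

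Summing the three bounds yields $\max_{0\le r\le h}\bigl|\tfrac1n\sum_{i=1}^{n-r}\beta_{n,i}\beta_{n,i+r}-\ccG(g)\bigr|\le MLh/n+M^2h/n+\cO(1/n)\to 0$, which is the assertion. The only point requiring genuine care is that every bound be uniform in $r$, and this is precisely where the condition $h=o(n)$ enters; beyond this bookkeeping there is no real obstacle, which is consistent with the elementary nature of the surrounding lemmata.
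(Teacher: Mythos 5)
Your proposal is correct and follows essentially the same route as the paper's proof: the same three-step decomposition into the Lipschitz replacement error $\cO(h/n)$ for the shift $\beta_{n,i+r}\mapsto\beta_{n,i}$, the $\cO(h/n)$ tail from completing the truncated sum, and the $\cO(1/n)$ Riemann-sum error for the full sum against $\ccG(g)$. No discrepancies to report.
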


		\begin{proof}[Proof of  \autoref{diss:lem:coefficient_convergence}]
		Using the Lipschitz continuity of $g$ we get that
		\begin{align}
			&\max_{0\leq r \leq h}|\sum_{i=1}^{n-r}\beta_{n,i}\beta_{n,i+r}/n-\ccG(g)|\nonumber\\
			&\leq  \max_{0\leq r \leq h} |\sum_{i=1}^{n-r}\beta_{n,i}\beta_{n,i}/n-\ccG(g)\Big|  + \max_{0\leq r \leq h} \bigg(\sum_{i=1}^{n-r}|\beta_{n,i}||\beta_{n,i}-\beta_{n,i+r}|/n\bigg) \label{diss:eq:abrupt_cov_case}\\
			&\leq   |\sum_{i=1}^{n}\beta_{n,i}\beta_{n,i}/n-\ccG(g)\Big| + \max_{0\leq r \leq h} |\sum_{i=n-r+1}^{n}\beta_{n,i}\beta_{n,i}\Big|/n\nonumber \\
			&\quad +  \bigg( \max_{0\leq r \leq h} \max_{1\leq i\leq n-r}|\beta_{n,i}-\beta_{n,i+r}| \bigg)  \bigg(\sum_{i=1}^{n}|\beta_{n,i}|/n\bigg)=\cO(h/n)\nonumber
		\end{align} 
		 which completes the proof.
		\end{proof}

 	The next theorem is an extension of Lemma B.2 of \citet{horvath2013testing}.
	 \begin{theorem}\label{diss:theorem:intermediate_conv_alt_bothconcepts}
	 Let all assumptions of \autoref{thm:alternative} hold true and assume that $\|\ccC-\hat{\ccC}\|_{\cS}=o_P(1)$ under $H_0$. It holds under $H_A$ that, 
	 \begin{equation}\label{diss:eq:decomp_as}
	 	\Big\|\hat{\ccC} -  s_n \ccD\Big\|_{\cS}=o_P(h),
	 \end{equation}
	 as $n\rightarrow\infty$, with $s_n = \ccG(g)\sum_{r=-n}^{n}\ccK(r/h)$  and $\ccD=\Delta\otimes \Delta$.
	 \end{theorem}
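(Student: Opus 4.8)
The plan is to substitute the decomposition $\eta_i-\bar{\eta}_n = \tilde\beta_{n,i}\Delta + (\varepsilon_i-\bar\varepsilon_n)$, where $\tilde\beta_{n,i}=g(i/n)-\bar g_n$ with $\bar g_n=n^{-1}\sum_{i=1}^n g(i/n)$ and $\bar\varepsilon_n=n^{-1}\sum_{i=1}^n\varepsilon_i$, into $\hat{\ccC}=\hat{\ccC}_B$. Since each $\hat{\ccC}_r$ is a sum of tensors $[\eta_i-\bar\eta_n]\otimes[\eta_{i+r}-\bar\eta_n]$, bilinearity of $\otimes$ splits $\hat{\ccC}_B$ into a \emph{signal} part carrying $\Delta\otimes\Delta$, two \emph{cross} parts of the form $\Delta\otimes(\cdot)$ and $(\cdot)\otimes\Delta$, and a \emph{noise} part built only from $\varepsilon_i-\bar\varepsilon_n$. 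I will then show that the signal part equals $s_n\ccD$ up to a deterministic $o(h)$ and that the cross and noise parts are $o_P(h)$ in Hilbert--Schmidt norm, which gives \eqref{diss:eq:decomp_as}. Throughout I use that $\ccK$ is bounded with bounded support, so only $\cO(h)$ lags contribute and $\sum_{r=-n}^n|\ccK(r/h)|=\cO(h)$.

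For the signal part, the coefficient of $\ccD=\Delta\otimes\Delta$ is $\sum_r\ccK(r/h)\,n^{-1}\sum_i\tilde\beta_{n,i}\tilde\beta_{n,i+r}$. Replacing $\tilde\beta_{n,i}$ by $\beta_{n,i}$ from \eqref{eq:defbetas} only perturbs each coefficient by a uniformly negligible amount (because $\bar g_n-\int_0^1 g=\cO(1/n)$ for Lipschitz $g$), so that after the kernel sum the perturbation is $o(h)$. \autoref{diss:lem:coefficient_convergence} then gives $n^{-1}\sum_i\beta_{n,i}\beta_{n,i+r}=\ccG(g)+o(1)$ uniformly over $0\le r\le h$, whence the coefficient equals $\ccG(g)\sum_r\ccK(r/h)+o(1)\cdot\cO(h)=s_n+o(h)$; multiplying by $\|\ccD\|_{\cS}=\|\Delta\|^2=1$ shows the signal part is $s_n\ccD+o(h)$. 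For the noise part, observe that it coincides with the Bartlett estimator formed from the pure noise sequence, i.e. with the estimator $\hat{\ccC}$ one obtains under $H_0$ (there the constant mean cancels in $\eta_i-\bar\eta_n$); by the hypothesis $\|\ccC-\hat{\ccC}\|_{\cS}=o_P(1)$ under $H_0$ it converges to $\ccC$, hence is $\cO_P(1)=o_P(h)$.

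The main obstacle is the cross part. Each cross term has the shape $\Delta\otimes w$ or $w\otimes\Delta$ with $w=\sum_r\ccK(r/h)\,n^{-1}\sum_i\tilde\beta_{n,i}\,\tilde\varepsilon_{i+r}$ (the two being symmetric via the symmetry of $\ccK$), and since $\|\Delta\otimes w\|_{\cS}=\|\Delta\|\,\|w\|=\|w\|$ it suffices to prove $\|w\|=o_P(h)$. I would split $\tilde\varepsilon_{i+r}=\varepsilon_{i+r}-\bar\varepsilon_n$. The $\bar\varepsilon_n$ contribution factors out and is controlled using $\sum_{i=1}^n\tilde\beta_{n,i}=0$ — so the truncated partial sums $\sum_{i=1}^{n-r}\tilde\beta_{n,i}$ are $\cO(r)=\cO(h)$ — together with $\|\bar\varepsilon_n\|=\cO_P(n^{-1/2})$, yielding a term of order $\cO_P(h^2 n^{-3/2})=o_P(h)$ since $h=o(n)$. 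The remaining part $w''=\sum_r\ccK(r/h)\,n^{-1}\sum_i\beta_{n,i}\varepsilon_{i+r}$ is a linear form $\sum_j a_{n,j}\varepsilon_j$ whose coefficients obey $\max_j|a_{n,j}|=\cO(h/n)$, because $\beta$ is bounded and $\sum_r|\ccK(r/h)|=\cO(h)$. A direct second-moment estimate — morally a rescaled \autoref{diss:prop:weaklpm} — then gives
\[
	E\|w''\|^2\le\Big(\max_j|a_{n,j}|\Big)^2\sum_{i,j=1}^n|E\langle\varepsilon_i,\varepsilon_j\rangle|=\cO(h^2/n^2)\cdot\cO(n)=\cO(h^2/n),
\]
using the covariance summability $\sum_{i,j}|E\langle\varepsilon_i,\varepsilon_j\rangle|=\cO(n)$ valid under $\ccL^4$-$m$-approximability, so that $\|w''\|=\cO_P(h n^{-1/2})=o_P(h)$. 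Collecting the three parts proves \eqref{diss:eq:decomp_as}; the delicate points are the bookkeeping of the triangular-array coefficients $a_{n,j}$ in $w''$ and verifying that the mean-centering corrections are genuinely negligible, both of which hinge on $h=o(n)$.
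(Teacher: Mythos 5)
Your proof is correct and follows essentially the same route as the paper's: the same signal/cross/noise tensor decomposition of the Bartlett estimator, \autoref{diss:lem:coefficient_convergence} for the signal coefficient, a second-moment bound in the spirit of \autoref{diss:prop:weaklpm} for the cross terms, and the $H_0$ hypothesis to dispose of the noise part. The only differences are cosmetic: you center exactly with $\tilde\beta_{n,i}$ where the paper absorbs the discrepancy into a uniform $o(1)$ remainder, and you bound the kernel-weighted linear form in one shot where the paper splits it term by term using the Lipschitz continuity of $g$.
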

	  
	 \begin{proof}[Proof of \autoref{diss:theorem:intermediate_conv_alt_bothconcepts}] We sketch the proof only for a Lipschitz continuous $g$.
	Set the $\beta_{n,i}$ according to \eqref{eq:defbetas} and observe that
	\begin{align}\label{diss:eq:conv_uniform_ref}
	\begin{split}
		\eta_{i+r}-\bar{\eta}_n &= \varepsilon_{i+r} -\bar{\varepsilon}_n + \beta_{n,i} \Delta + r_n, 
	\end{split}
	\end{align}
	where $r_n=o(1)$ is uniform in $i$ and $r$. Let us decompose as follows
		\begin{align} 
	{(\eta_i-\bar{\eta}_n)}\otimes{(\eta_{i+r}-\bar{\eta}_n)}= &  {(\varepsilon_i-\bar{\varepsilon}_n)}\otimes{(\varepsilon_{i+r}-\bar{\varepsilon}_n)} +  \beta_{n,i}\beta_{n,i+r}(\Delta \otimes \Delta)\\ 
	&\qquad+ \beta_{n,i+r} (\varepsilon_{i} \otimes \Delta)  +   \beta_{n,i} (\Delta \otimes \varepsilon_{i+r}) +  R_n
 	\end{align}
 	with a remainder $R_n$ which is of order $o_P(h)$ due to $\sum_{i=1}^n \beta_{n,i}\varepsilon_i/n=o_P(1)$ and the latter being a consequence of \autoref{diss:prop:weaklpm}.
 	The proof is finished on showing that the term $\beta_{n,i}\beta_{n,i+r}(\Delta \otimes \Delta)$ is dominating.	It holds that
	\begin{align}
	&\Big\|\sum_{r=1}^{n}\ccK(r/h)  \sum_{i=1}^{n-r} \beta_{n,i+r} \Big[ \varepsilon_{i} \otimes \Delta\Big]/n\Big\|_{\cS}\nonumber\\
	&\leq   \lfloor ch\rfloor \max_{1\leq r\leq \lfloor ch\rfloor} \Big\| \sum_{i=1}^{n-r} \beta_{n,i} \varepsilon_{i}\Big\|/n  + \sum_{r=1}^{ \lfloor ch\rfloor} \bigg(\sum_{i=1}^{n-r}|\beta_{n,i}-\beta_{n,i+r}|\| \varepsilon_{i}\|/n\bigg) \label{diss:eq:more_delicate1}\\
	&\leq  \lfloor ch\rfloor \max_{1\leq r\leq  \lfloor ch\rfloor} \Big\| \sum_{i=1}^{n-r} \beta_{n,i} \varepsilon_{i}\Big\|/n \nonumber \\
	&\quad+  \lfloor ch\rfloor \max_{1\leq r \leq  \lfloor ch\rfloor}\Big(\max_{1\leq i\leq n-r}|\beta_{n,i}-\beta_{n,i+r}|\Big)  \bigg(\sum_{i=1}^{n}\| \varepsilon_{i}\|/n\bigg)=o_P(h)\nonumber
	\end{align} 
	for some $c>0$. The rate $o_P(h)$ is shown as follows: First, using \autoref{diss:prop:weaklpm} and $\sum_{i=n-\lfloor c h \rfloor +1}^n |\beta_{n,i}|E\| \varepsilon_{i}\|=\cO(h)$ we observe that
	\begin{align*}
	 &\max_{1\leq r\leq \lfloor ch\rfloor} \Big\| \sum_{i=1}^{n-r} \beta_{n,i} \varepsilon_{i}\Big\|/n \leq \| \sum_{i=1}^{n} \beta_{n,i} \varepsilon_{i}\Big\|/n + \!\sum_{{i=n- \lfloor ch\rfloor+1}}^n |\beta_{n,i}| \| \varepsilon_{i}\|/n =\cO_P(h/n).
	\end{align*} 
	Second, by the Lipschitz continuity and the law of large numbers we get
	\begin{align}
	\max_{1\leq r \leq  \lfloor ch\rfloor}\Big(\max_{1\leq i\leq n-r}|\beta_{n,i}-\beta_{n,i+r}|\Big)  \bigg(\sum_{i=1}^{n}\| \varepsilon_{i}\|/n\bigg)=\cO(h/n).
	\end{align} 
	 An application of \autoref{diss:lem:coefficient_convergence} completes the proof.
	\end{proof}

		For the next proofs we note that we have uniform convergence towards partial sums of expectations, in view of \eqref{eq:funcsums}, and they converge uniformly via piecewise Lipschitz continuity of $g$ towards an integral expression involving \eqref{eq:integralsup}: 
 \begin{equation}\label{eq:mainalt} 
 	 \sup_{0\leq x \leq 1}\|S_{\frac{\lfloor n x \rfloor}{n}}(\eta)/n^{1/2} - \cG(x)\Delta \|=\cO_P(n^{-1/2})=o_P(1). 
 \end{equation}
 
	\begin{proof}[Proof of \autoref{thm:alternativeGeneral}]
	We consider the projection in the $k$-th direction
\[
	\|\hat{\ccC}^{(d)} S_k(\eta)\|\geq |\langle S_k(\eta)/n^{1/2},\hat{v}_k\rangle| |n/\hat{\lambda}_k|^{1/2}.
\]
A combination of  \eqref{eq:cGlarge}, \eqref{eq:mainalt} and of the assumptions on $\hat{v}_k$ and $\hat{\lambda}_k$ yields the assertion. 
\end{proof}
	
\begin{proof}[Proof of \autoref{thm:alternative}]
Similar to the proof of \autoref{thm:alternativeGeneral} we may consider the relation 
\[
	(h/n)^{1/2}\|\hat{\ccC}^{(d)}_{B} S_k(\eta)\|= \|h^{1/2}\hat{\ccC}^{(d)}_{B} S_k(\eta)/n^{1/2}\|\geq \|h^{1/2}\hat{\ccC}^{(1)}_{B} S_k(\eta)/n^{1/2}\|.
\]
From \autoref{diss:theorem:intermediate_conv_alt_bothconcepts} and the Proposition 4 of \citet{lukasz2015} we obtain that $\|\hat{\ccC}_{B}/h-\kappa \ccD\|_{\cS}=o_P(1)$ with $\kappa=\ccG(g)\int_{-\infty}^\infty \ccK(x)dx$ and thus $\hat{\lambda}_1/h=\kappa+o_P(1)$, $\hat{\lambda}_2=o_P(1)$ and also $\|\hat{v}_1-\hat{s}\Delta\|=o_P(1)$ with $\hat{s}=\sign \langle \hat{v}_1,\Delta \rangle$.
Hence, it is $\|h^{1/2}\hat{\ccC}^{(1)}_{B}-\kappa^{-1/2}\ccD\|_{\cS}=o_P(1)$. 
Moreover, we know from \eqref{eq:mainalt} that 
\[
	\max_{1\leq k \leq n}\|\ccD S_{k}(\eta)/n^{1/2}\|=\ccS+o_P(1)
\] 
with $\ccS>0$ and the proof is complete.  
\end{proof}

\begin{proof}[Proof of \autoref{cor:nullAligned}]
The assertion follows immediately from \autoref{thm:convH0} observing that 
\begin{align*}
	|\hat{\ccT}^{(d)}- \hat{\ccT}^{(d)\prime}|&\leq \max_{1\leq k <n}|S_k(\hat{\bvarepsilon})-S_k(\hat{\bvarepsilon}^\prime)|_{\hat{{\Sigma}}}\\
	 &= \max_{1\leq k <n}\|S_k(\varepsilon)\|\|\hat{v}_1-\hat{v}_1^\prime\|/|\hat{\lambda}_1|^{1/2}\\
	  &\leq \max_{1\leq k <n}\|S_k(\varepsilon)\|\frac{\|\hat{v}_1\||1-\|\hat{v}_1+n^\gamma \hat{s}\hat{u}\||+\|n^\gamma\hat{u}\| }{\|\hat{v}_1+n^\gamma \hat{s}\hat{u}\|}\frac{1}{|\hat{\lambda}_1|^{1/2}}\\
	 &=o_P(1),
\end{align*}
since $\|\hat{v}_1\|=1$ and $n^\gamma \hat{u}=\cO_P(n^{-1/2+\gamma})=o_P(1)$ in view of \eqref{eq:funcsums}. We used that $n^\gamma \hat{u}=\cO_P(n^{-1/2+\gamma})=o_P(1)$ since $\gamma\in(0,1/2)$ and that $\hat{\lambda}_1=\lambda_1+o_P(1)$ if $\|\ccC-\hat{\ccC}\|_{\cS}=o_P(1)$.
\end{proof}

\begin{proof}[Proof of \autoref{thm:alternativeAligned}] 
Relation \eqref{eq:mainalt} yields that 
 \[
 	\|\hat{u}\|=\max_{1\leq k <n}\|S_k(\eta)/n^{1/2}\|=\ccS+o_P(1)
\] 
and also that $\|\hat{v}_1/n^{\gamma} + \hat{s}\hat{u}\|=\ccS +o_P(1)$ with $\ccS>0$.
   Due to $\smash{\hat{\ccT}_n^{(d)\prime}\geq \hat{\ccT}_n^{(1)\prime}}$, it is sufficient to consider the behavior of the statistic in the first direction which corresponds to
\begin{align}\label{eq:step1} 
\begin{split}
\hat{\ccT}_n^{(1)\prime} &=\max_{1\leq k < n}|\langle S_{k}(\eta)/n^{1/2},\hat{v}_1/n^{\gamma} + \hat{s}\hat{u}\rangle|\Big|\frac{|n/\hat{\lambda}_1|^{1/2}}{\|\hat{v}_1/n^{\gamma} + \hat{s}\hat{u}\|}\\
&\geq  \Big|\max\limits_{1\leq k < n}|\langle S_{k}(\eta)/n^{1/2},\hat{v}_1/n^{\gamma}\rangle| \\
&\quad -\max\limits_{1\leq k<n}|\langle S_{k}(\eta)/n^{1/2}, \hat{u}\rangle|\Big|\frac{|n/\hat{\lambda}_1|^{1/2}}{\|\hat{v}_1/n^{\gamma} + \hat{s}\hat{u}\|}.\\
\end{split}
\end{align}
Via \eqref{eq:mainalt} we observe, for one thing, that
\begin{align} 
	  \max_{1\leq k < n}\frac{|\langle S_{k}(\eta)/n^{1/2},\hat{v}_1/n^{\gamma}\rangle|}{\|\hat{v}_1/ n^{\gamma}+\hat{s} \hat{u}\|}
	  &\leq \frac{\|\hat{u}\|}{\|\hat{v}_1/n^{\gamma} +\hat{s}\hat{u}\|} \frac{\|\hat{v}_1\|}{n^{\gamma}}=\cO_P(n^{-\gamma})
\end{align}
and, for another thing, by evaluating at $k=\hat{k}$, that
\begin{equation}\label{eq:step3}
	\max_{1\leq k < n}\frac{|\langle S_{k}(\eta)/n^{1/2},\hat{u}\rangle|}{\|\hat{v}_1/n^{\gamma} + \hat{s}\hat{u}\|}\geq \|\hat{u}\|\frac{\|\hat{u}\|}{\|\hat{v}_1/n^\gamma + \hat{u}\|}=\ccS+o_P(1),
\end{equation}
The claim follows by \eqref{eq:cGlarge} and by recalling the assumption that $P(|n/\hat{\lambda}_1|>1/c)=P(|\hat{\lambda}_1/n|<c)\rightarrow 1$, for any  $c>0$.

In contrast to the proof of \autoref{thm:alternative} we did not \textit{explicitly} use the asymptotic behavior of $\hat{v}_1^\prime$.
\end{proof}

 
	\vspace{1cm}
	{\footnotesize
\begingroup 
\setstretch{0.3}
\microtypesetup{tracking=false}
 
\par
\endgroup 
}

\end{document}